\newtheorem{theorem}{Theorem}[section]
\newtheorem{lemma}[theorem]{Lemma}
\newtheorem{Dynkin grading properties}[theorem]{Dynkin grading properties}
\newtheorem{prop}[theorem]{Proposition}
\theoremstyle{definition}
\newtheorem{example}[theorem]{Example}
\newtheorem{rem}[theorem]{Remark}
\def\ppp{\mathfrak{p}}
\def\ggg{\mathfrak{g}}
\def\gl{\mathfrak{gl}}
\def\co{\mathcal{O}}
\def\bbz{\mathbb{Z}}
\def\bbc{\mathbb{C}}
\def\bbk{\mathds{k}}
\def\dsk{\mathds{k}}
\def\col{\text{col}}
\def\row{\text{row}}
\def\bo{{\bar 1}}
\def\bz{{\bar 0}}
\def\deg{\text{deg}}
\def\ev{\text{ev}}
\numberwithin{equation}{section}
\begin{document}
\title[On Kac-Weisfeiler modules]{On Kac-Weisfeiler modules for general and special linear Lie superalgebras}
\author{Yang Zeng and Bin Shu}
\thanks{\nonumber{{\it{Mathematics Subject Classification}} (2000 {\it{revision}})
Primary 17B35. Secondary 17B50. This work is  supported partially by the NSF of China (No. 11271130; 11201293; 111126062),  the Innovation Program of Shanghai Municipal Education Commission (No. 12zz038). }}
\address{School of Science, Nanjing Audit University, Nanjing, Jiangsu Province 211815, China}
\email{zengyang214@163.com}
\address{Department of Mathematics, East China Normal University, Shanghai 200241, China}
\email{bshu@math.ecnu.edu.cn}
\begin{abstract}
\noindent Let $\ggg:=\gl_{m|n}$  be a general linear Lie superalgebra
over an algebraically closed field $\mathds{k}=\overline{\mathbb{F}}_p$ of characteristic $p>2$. A module of $\ggg$ is said to be of Kac-Weisfeiler if its dimension coincides with the dimensional lower bound in the super Kac-Weisfeiler property presented by Wang-Zhao in \cite{WZ}. In this paper, we verify the existence of the Kac-Weisfeiler modules for $\gl_{m|n}$. We also establish the corresponding consequence for the special linear Lie superalgebras $\mathfrak{sl}_{m|n}$ with restrictions $p>2$ and $p\nmid(m-n)$.
\end{abstract}
\maketitle

\setcounter{section}{-1}
\section{Introduction}
\subsection{}\label{0.1}
Let $\mathfrak{g}={\ggg}_{\bar0}+{\ggg}_{\bar1}$ be a basic Lie superalgebra over an algebraically closed field $\mathds{k}=\overline{\mathbb{F}}_p$ of characteristic $p>2$, and $U(\mathfrak{g})$ be the universal enveloping algebra of $\mathfrak{g}$. For any $p$-character $\xi\in\mathfrak{g}^*_{\bar{0}}$, the reduced enveloping algebra $U_\xi(\mathfrak{g})$ is defined as the quotient of $U(\mathfrak{g})$ by its two-sided ideal generated by all even central elements $x^p-x^{[p]}-\xi(x)^p$ with $x\in\mathfrak{g}_{\bar{0}}$. For every irreducible $\mathfrak{g}$-module $V$, there is a unique linear function $\xi:=\xi_V\in\mathfrak{g}^*_{\bar{0}}$ such that $V$ is a $U_\xi(\mathfrak{g})$-module. Write $\mathfrak{g}^\xi$ for the centralizer of $\xi$ in $\ggg$ and denote by $d_i:=\text{dim}\,\ggg_i-\dim \ggg_i^\xi$ for $i\in\mathbb{Z}_2$.

As an extension of the Kac-Weisfeiler conjecture (proved by Premet in \cite{P1}) for the modular representations of reductive Lie algebras, Wang-Zhao formulated the super Kac-Weisfeiler (KW for short) property for a basic Lie superalgebra $\mathfrak{g}$ in \cite{WZ}. It shows that for arbitrary $\xi\in\mathfrak{g}^*_{\bar{0}}$, the dimension of every $U_\xi(\mathfrak{g})$-module is divisible by $p^{\frac{d_0}{2}}2^{\lfloor\frac{d_1}{2}\rfloor}$,
where $\lfloor\frac{d_1}{2}\rfloor$ denotes the least integer upper bound of $\frac{d_1}{2}$. Here we make a proposal that in the category of $U_\xi(\ggg)$-modules, {\sl{an object is said to be of Kac-Weisfeiler if its dimension coincides with the low-bound $p^{\frac{d_0}{2}}2^{\lfloor\frac{d_1}{2}\rfloor}$}}. What we are concerned with in the present paper is the problem whether the Kac-Weisfeiler modules exist, parallel to the one for reductive Lie algebras in prime characteristic.

\subsection{}
Let us recall the same story for reductive Lie algebras in prime characteristic.
Let $\ggg_0$  be the Lie algebra of a simple and simply connected algebraic $\mathds{k}$-group $G_0$. One of the challenging open problems in the representation theory of $\ggg_0$ is to verify that for every $\xi\in\ggg_0^*$ the reduced enveloping algebra $U_\xi(\ggg_0)$ has a simple module of dimension $p^{\frac{\dim\,\co_\xi}{2}}$, where $\co_\xi$ denotes  the $G_0$-coadjoint orbit of $\xi$ in $\ggg_0^*$; see \cite[\S8]{Hu}, for example. In fact, one can show that the number $\dim\co_\xi$ is equal to $\frac{d_0}{2}$, which is exactly the dimensional lower bound for the representations of the $\mathds{k}$-algebra $U_\xi(\mathfrak{g}_0)$ in the celebrated Kac-Weisfeiler conjecture \cite{KW}. As pointed by Premet in \cite{P7}, the answer to this question is positive for the Lie algebras of type $A$. Actually, one can easily construct such a representation by inducing from a one-dimensional representation of an appropriate parabolic subalgebra in this case. In \cite[Corollary 2.3]{P7}, Premet further gave positive answers for the Lie algebras of types $B, C, D$ under the assumption that $p\gg0$. It is notable that the proof given by Premet is critically depend on some deep results of representations of finite $W$-algebras over the field of complex numbers.

\subsection{}
Return to the situation of basic Lie superalgebras. Let $\mathfrak{g}$ be the general linear Lie superalgebra $\mathfrak{gl}_{m|n}$ or the special linear Lie superalgebra $\mathfrak{sl}_{m|n}$ over $\mathds{k}$ under the assumption that  $p\nmid(m-n)$ for $\mathfrak{g}=\mathfrak{sl}_{m|n}$. In the present paper, we will give a positive answer to the question addressed in \S\ref{0.1}. The tool is some artful application of the pyramid introduced in \cite{EK} and its generalization in \cite[\S7]{H}, along with the super version of Kac-Weisfeiler's Morita equivalence (cf. \cite{WZ}).

Let us explain our approach explicitly. In virtue of the generalized pyramids with each box  marked with a ``$+$'' (resp. ``$-$'') sign, the gradings of $\mathfrak{gl}_{m|n}$ are constructed (cf. \cite[\S7]{H}). Following Brundan-Goodwin's treatment to the Lie algebra case (c.f. \cite[\S6]{BG3}), one can fix  a numbering $\overline1,\overline2,\cdots,\overline m$ (resp. $1,2,\cdots,n$) of the boxes with a ``$+$'' (resp. ``$-$'') sign for the generalized pyramids, and define the corresponding elements and subalgebras in $\mathfrak{gl}_{m|n}$.

With the above setup, we construct the ``shifted pyramids" associated with a given pair of partitions,  from their symmetric ones of $\mathfrak{gl}_{m|n}$ (a symmetric pyramid will be called a Dynkin one in the paper). Associate with the ``shifted pyramid", we construct a parabolic subalgebra of $\mathfrak{gl}_{m|n}$ such that it has a one-dimensional module. Then by inducing this module we obtain a representation of $\mathfrak{gl}_{m|n}$ which turns to be a Kac-Weisfeiler module. The Kac-Weisfeiler modules of $\mathfrak{sl}_{m|n}$ can also be constructed in the same way, since each element of $\mathfrak{gl}_{m|n}$ can be considered as an element in $\mathfrak{sl}_{m|n}$ up to a scalar multiple of unitary matrix.

\subsection{}

The paper is organized as follows. In \S\ref{1}, we recall some basics on the Dynkin gradings for a basic Lie superalgebra $\mathfrak{g}$, along with the super  Kac-Weisfeiler property.  The contents of \S\ref{2} are devoted to the connections between the (shifted) pyramids and the representations of $\mathfrak{gl}_{m|n}$. We then construct in \S\ref{3} a class of Kac-Weisfeiler modules with dimensions equal to the lower bounds introduced in \S\ref{0.1} based on the consequences obtained in \S\ref{2}.

In the forthcoming paper \cite{ZS2}, we will certify the existence of the Kac-Weisfeiler modules for all basic Lie superalgebras under some assumption. Our arguments will be critically dependent on the theory of finite $W$-superalgebras developed in \cite{ZS}, along with an assumption which is conjectured to be fully satisfied, saying that the smallest dimension for the representations of a finite $W$-superalgebra over the field of complex numbers is one or two, according to its detecting parity  (Although this conjecture has not been settled in full generality, we can still establish it in a lot of special cases, including all the Lie superalgebras of type $A$ based on the results of the present paper).

\subsection{}
Throughout we work over $\dsk$ an algebraically closed field of characteristic $p>2$ as the ground field.
Let ${\bbz}_+$ be the set of all the non-negative integers in ${\bbz}$, and denote by $\mathbb{Z}_2$ the residue class ring modulo $2$ in $\mathbb{Z}$. A superspace is a $\mathbb{Z}_2$-graded vector space $V=V_{\bar0}\oplus V_{\bar1}$, in which we call elements in $V_{\bar0}$ and $V_{\bar1}$ even and odd, respectively.  Write $|v|\in{\bbz}_2$ for the parity (or degree) of $v\in V$, which is implicitly assumed to be ${\bbz}_2$-homogeneous. We will use the notations
$$\text{\underline{dim}}V=(\text{dim}V_{\bar0},\text{dim}V_{\bar1}),\quad\text{dim}V=\text{dim}V_{\bar0}+\text{dim}V_{\bar1}.$$

By vector spaces, subalgebras, ideals, modules, and submodules etc.,  we mean in the super sense unless otherwise specified, throughout the paper.

\section{preliminary}\label{1}
\subsection{Dynkin gradings for $\mathfrak{gl}_{m|n}$ and $\mathfrak{sl}_{m|n}$}\label{1.1}

Let $\mathfrak{g}=\mathfrak{gl}_{m|n}$ with $m,n\in\mathbb{Z}_+$. For a given nilpotent element $e$ in $\ggg$, we associate with a partition $(q_m,q_n)$ of $(m|n)$, where $q_i = (q_i(1),\cdots,q_i(r_i))$ of length $r_i$ is the shape of the Jordan canonical form of the summand of $e$ in $\gl_i$ for $i=m,n$ respectively. Denote $(\mbox{GL}_{m|n})_\ev:=\mbox{GL}(m,\bbk)\times \mbox{GL}(n,\bbk)$, then there is a one-to-one correspondence between $(\mbox{GL}_{m|n})_\ev$-orbits of nilpotent even elements in $\gl_{m|n}$ and partitions of $(m|n)$.

Let $V = V_\bz\oplus V_\bo$ be a vector space with $\underline{\dim}\,V = (m|n)$. Identify $\gl_{m|n}$ with $\gl(V)$, whose even
part is $\gl(V)_\bz = \gl(V_\bz) \oplus \gl(V_\bo)$. Then the nilpotent element $e$ mentioned above is actually a nilpotent transformation on $V$. Associated with $e$, there exist $v_1,\cdots,v_{r_m}\in V_\bz$ and  $u_1,\cdots, u_{r_n}\in V_\bo$ such that all $e^jv_s$ and $e^lu_t$ with $1\leqslant  s \leqslant  r_m$, $1\leqslant t\leqslant  r_n$, and $0\leqslant  j< q_m(s)$, $0\leqslant  l<q_n(t)$, are the basis for $V_\bz$ and $V_\bo$ such that $e^{q_m(s)}v_s = 0$, $e^{q_n(t)}u_t=0$. One can endow $V$  with a $\bbz$-graded structure  via defining $V(k)_\bz$ and $V(k)_\bo$ equal to the span of $e^jv_s$ with $k = 2j+1-q_m(s)$ and $e^ju_t$ with $k = 2j+1-q_n(t)$, respectively.

Now we will introduce a special $\mathbb{Z}$-grading for $\mathfrak{g}$. Consider a cocharacter $\tau = \tau_0\times\tau_1$ which is a homomorphism of algebraic groups from $\dsk^\times(=\dsk\backslash\{0\})$ to $\mbox{GL}(V)_\ev:= \mbox{GL}(V_\bz) \times \mbox{GL}(V_\bo)$ defined as follows. For each $t\in \dsk^\times$, let $\tau(t) = \tau_0(t) \times\tau_1(t)$ be the linear map such that $\tau_i(t)(v) = t^kv$ for all $v\in V(k)_i:=V(k)\cap V_i$ and all $k\in\bbz$ with $i\in\{0,1\}$. (Here we identify $\{0,1\}$ with $\{\bz,\bo\}$ for the vector spaces $V_{\bar0}$ and $V_{\bar1}$, respectively).
Now by the arguments in \cite[Remark 3.4]{WZ}, one can define a $\bbz$-grading on $\ggg$ via setting \begin{align}\label{Dynkingrading}
\ggg=\bigoplus_{k\in\bbz}\ggg(k)\mbox{ with }
&\ggg(k) = \{Z \in\gl(V)\mid \mbox{Ad}(\tau(t))(Z) = t^kZ\mbox{ for all }t\in \dsk^\times\}.
\end{align}
The above grading is called the Dynkin grading associated to the nilpotent element $e$ of partition $(q_m,q_n)$.

Recall \cite[Proposition 2.1]{ZS} shows that there exists an even non-degenerate supersymmetric invariant bilinear form $(\cdot,\cdot)$. If we write $\ggg^e$ for the centralizer of $e$ in $\ggg$, then the Dynkin grading satisfies the following properties:

\begin{prop}(\cite[Theorem 3.1]{WZ})\label{Dynkingradingprop}
The following are true:
\begin{itemize}
\item[(1)] $\text{ad}\,e:~\mathfrak{g}(j)\rightarrow\mathfrak{g}(j+2)~\text{is injective for}~j\leqslant-1;$
\item[(2)] $\text{ad}\,e:~\mathfrak{g}(j)\rightarrow\mathfrak{g}(j+2)~\text{is surjective for}~j\geqslant-1;$
\item[(3)] the Dynkin grading is compatible with the $\bbz_2$-grading; this is to say, $\ggg(k)=\ggg(k)_\bz+\ggg(k)_\bo$ with $\ggg(k)_i=\ggg(k)\cap\ggg_i$ for $i\in\bbz_2$, $k\in\bbz$;
\item[(4)] $e\in \ggg(2)_\bz$;  $\ggg^e(s)=0$ if $s<0$;
\item[(5)] $\underline{\dim}\,\ggg^e=\underline{\dim}\,\ggg(0)+\underline{\dim}\,\ggg(1)$;
\item[(6)] $(\ggg(k),\ggg(l))=0$ if $k+l\ne 0$; $\underline{\dim}\,\ggg(k)=\underline{\dim}\,\ggg(-k)$ for all $k\in \bbz$.
\end{itemize}
\end{prop}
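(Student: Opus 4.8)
The plan is to reduce everything to the $\mathfrak{sl}_2$-string combinatorics carried by the explicit Jordan basis, and then to use the invariant form to bypass any delicate characteristic-$p$ estimates. Writing $V=\bigoplus_c V(c)$ for the degree decomposition, I would first record that each Jordan chain $v_s,ev_s,\dots,e^{q_m(s)-1}v_s$ (and each $u_t$-chain) occupies the degrees $1-q_m(s),3-q_m(s),\dots,q_m(s)-1$, a string symmetric about $0$ with step $2$; hence $e$ raises degree by exactly $2$ and preserves parity, which already gives $e\in\ggg(2)_\bz$, the first clause of (4). Identifying $\ggg=\gl(V)$ with $V\otimes V^*$, the adjoint action becomes $\text{ad}\,e=e\otimes 1-1\otimes e^*$, and $\ggg(k)=\bigoplus_{c'-c=k}\text{Hom}(V(c),V(c'))$.

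The formal assertions then come for free. Since $\tau=\tau_0\times\tau_1$ preserves $V_\bz$ and $V_\bo$, the operator $\text{Ad}(\tau(t))$ preserves $\ggg_\bz$ and $\ggg_\bo$, so each $\ggg(k)$ splits as $\ggg(k)_\bz\oplus\ggg(k)_\bo$, which is (3). For (6), $\tau$-invariance of the even form gives $(X,Y)=(\text{Ad}(\tau(t))X,\text{Ad}(\tau(t))Y)=t^{k+l}(X,Y)$ for $X\in\ggg(k)$, $Y\in\ggg(l)$, forcing $(\ggg(k),\ggg(l))=0$ unless $k+l=0$; non-degeneracy then makes $\ggg(k)$ and $\ggg(-k)$ dual in each parity (the form being even), whence $\underline{\dim}\,\ggg(k)=\underline{\dim}\,\ggg(-k)$.

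The heart of the matter is (1), equivalently the statement $\ggg^e(s)=0$ for $s<0$ in (4), i.e. that $\ker(\text{ad}\,e)$ lies in non-negative degrees. I would prove this blockwise: on $\text{Hom}(S,S')$ for a pair of Jordan strings of sizes $a,b$, with basis $E_{i,j}$ ($0\le i<a$, $0\le j<b$) in degree $2(i-j)+(b-a)$, the map $\text{ad}\,e$ becomes the purely combinatorial raising operator $E_{i,j}\mapsto E_{i+1,j}-E_{i,j-1}$ (boundary terms zero). A vector $\sum c_{i,j}E_{i,j}$ is annihilated precisely when $c$ is constant along each diagonal $i-j=\text{const}$ and vanishes on the diagonals meeting the top and right edges of the grid; a short count then shows $\dim\ker=\min(a,b)$ and that every surviving diagonal lies in degree $\ge 0$. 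Crucially this argument uses only $\pm1$ coefficients and the combinatorics of diagonals, so it is independent of $p$. This is exactly where the naive $\mathfrak{sl}_2$-triple argument would break down in small characteristic, since its structure constants $j(a-j)$ can vanish mod $p$, and producing this characteristic-free replacement is the main obstacle.

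The remaining assertions follow formally. For (2), the invariance identity $(\text{ad}\,e(X),Y)=-(X,\text{ad}\,e(Y))$ shows that, under the perfect pairing of (6), the transpose of $\text{ad}\,e\colon\ggg(d)\to\ggg(d+2)$ is $-\text{ad}\,e\colon\ggg(-d-2)\to\ggg(-d)$; surjectivity for $d\ge-1$ is thus dual to injectivity on $\ggg(-d-2)$ with $-d-2\le-1$, which is (1). Finally (5) is a telescoping count: writing $m_d$ for $\underline{\dim}\,\ggg(d)$ and working in each parity separately (legitimate since $\text{ad}\,e$ is even), surjectivity (2) gives $\underline{\dim}\,\ggg^e(d)=m_d-m_{d+2}$ for $d\ge0$ while (4) kills the negative degrees, so $\underline{\dim}\,\ggg^e=\sum_{d\ge0}(m_d-m_{d+2})=m_0+m_1=\underline{\dim}\,\ggg(0)+\underline{\dim}\,\ggg(1)$.
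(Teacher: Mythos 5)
Your argument is correct, and it is worth noting at the outset that the paper itself offers no proof of this proposition: it is imported verbatim as \cite[Theorem 3.1]{WZ}, so any self-contained derivation is by definition a different route. What you supply is a complete characteristic-free proof. The decisive step is your replacement of the usual $\mathfrak{sl}_2$-representation-theory argument (whose structure constants $j(a-j)$ can indeed vanish mod $p$) by the explicit combinatorics of $\mathrm{ad}\,e$ on each block $\mathrm{Hom}(S,S')$ between Jordan strings: the raising operator $E_{i,j}\mapsto E_{i+1,j}-E_{i,j-1}$ has only $\pm1$ coefficients, its kernel is cut out by constancy along diagonals together with the boundary vanishing, and the surviving diagonals $i-j=d$ with $d\geqslant\max(0,a-b)$ all sit in degree $2d+(b-a)\geqslant 0$, with $\min(a,b)$ of them in total. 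I checked this count and the degree bound in both cases $a\geqslant b$ and $a<b$; it is right, and it yields (1) and the second clause of (4) simultaneously. The remaining items then follow formally exactly as you say: (3) from evenness of $\tau$, (6) from $\mathrm{Ad}(\tau(t))$-invariance of the supertrace form plus nondegeneracy, (2) by dualizing (1) through the perfect pairing $\ggg(d)\times\ggg(-d)\to\dsk$, and (5) by the telescoping sum $\sum_{d\geqslant 0}(m_d-m_{d+2})=m_0+m_1$ in each parity. Wang--Zhao's own treatment in type $A$ proceeds instead through an explicit description of the centralizer $\ggg^e$ of a nilpotent matrix; your blockwise kernel computation recovers the same information ($\dim\ggg^e=\sum\min(a,b)$ over pairs of strings) but packages it so that the degree estimate needed for (1) and (4) drops out directly. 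The only cosmetic slip is the labelling $\mathrm{Hom}(S,S')$ where your index conventions actually describe maps from the string of size $b$ into the string of size $a$; this does not affect the argument.
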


Recall $d_i=\dim\ggg_i-\dim\ggg^e_i$ for $i\in\bbz_2$ by the notation in \S\ref{0.1}, and it follows from Proposition \ref{Dynkingradingprop} that
\begin{equation}\label{dim}
\text{\underline{dim}}\,\ggg-\text{\underline{dim}}\,\ggg^e=\sum\limits_{k\geqslant2}
2\text{\underline{dim}}\,\ggg(-k)+\text{\underline{dim}}\,\ggg(-1).
\end{equation}
Therefore, $\text{dim}\,\ggg(-1)_{\bar1}$ and $d_1$ always share the same parity.
Set $\mathfrak{p}:=\bigoplus_{k\geqslant 0}\ggg(k)$ to be a parabolic subalgebra of $\ggg$. Define $\chi\in\ggg^{*}$ by letting $\chi(x)=(e,x)$ for $x\in\ggg$, then we have $\chi(\ggg_{\bar{1}})=0$. Since the bilinear form $(\cdot,\cdot)$ is invariant, it is easy to verify that $\chi(\ggg(k))=(e,\ggg(k))=0$ for all $k\neq-2$. In particular, $\chi(\ppp)=0$.

\begin{rem}
Since each element of $\mathfrak{gl}_{m|n}$ can be considered as an element in $\mathfrak{sl}_{m|n}$ up to a scalar multiple of unitary matrix, all the discussions in \S\ref{1.1} go through for the special linear Lie superalgebras $\mathfrak{sl}_{m|n}$, with a few modifications.
\end{rem}

\subsection{Super Kac-Weisfeiler Property}
Let $\ggg$ be a basic Lie superalgebra (c.f. \cite[\S2.2]{WZ}), and denote by $U_\xi(\ggg)$ the reduced enveloping algebra of $\ggg$ with $p$-character $\xi\in\ggg^*_{\bar0}$. In \cite[Theorem 4.3]{WZ} Wang-Zhao introduced the super Kac-Weisfeiler property with nilpotent $p$-characters for a basic Lie superalgebra $\ggg$ over positive characteristic field $\mathds{k}=\overline{\mathbb{F}}_p$ (with some restrictions on $p$, c.f. \cite[Table 1]{WZ}), i.e.
\begin{prop}(\cite{WZ})\label{wzd}
Let $\ggg$ be a basic Lie superalgebra and $\chi\in\ggg^*_{\bar0}$ be nilpotent. Then the dimension of every $U_\chi(\ggg)$-module $M$ is divisible by $p^{\frac{d_0}{2}}2^{\lfloor\frac{d_1}{2}\rfloor}$.
\end{prop}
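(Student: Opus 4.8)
The plan is to prove the divisibility directly for $\ggg$ by a downward induction organized by the bilinear form attached to $\chi$, reducing everything to two model computations: a symplectic/Heisenberg step that peels off factors of $p$ and an orthogonal/Clifford step that peels off factors of $2$. First I would introduce the form $B(x,y):=\chi([x,y])$ on $\ggg$. Since $\chi(\cdot)=(e,\cdot)$ for the nilpotent $e$ attached to $\chi$ via the nondegenerate invariant form $(\cdot,\cdot)$, invariance gives $B(x,y)=([e,x],y)$, so nondegeneracy of $(\cdot,\cdot)$ identifies the radical of $B$ with $\ggg^e=\ggg^\chi$. Hence $B$ descends to a nondegenerate even supersymmetric form on $\ggg/\ggg^\chi$ whose even block is alternating (forcing $d_0$ to be even, so that $p^{d_0/2}$ is a genuine integer) and whose odd block is symmetric of dimension $d_1$. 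Because $\chi$ kills $\ggg_\bo$ and $[\ggg_\bz,\ggg_\bo]\subseteq\ggg_\bo$, the even and odd blocks are $B$-orthogonal, so the even and odd reduction steps can be carried out independently.

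The inductive step on the even side mimics the classical Kac--Weisfeiler argument. If $d_0>0$ I would choose $x,y\in\ggg_\bz$ with $\chi([x,y])=1$; modulo the radical these span a hyperbolic pair, and $[x,y]$ spans a line on which $\chi$ is nonzero. After a suitable normalization of the restricted structure, the subalgebra generated by such a pair yields in $U_\chi(\ggg)$ a copy of the reduced Weyl algebra $\dsk\langle x,y\rangle/([x,y]-1,\,x^p-a,\,y^p-b)\cong M_p(\dsk)$, so every module over it, in particular the restriction of any $U_\chi(\ggg)$-module $M$, has dimension divisible by $p$. Passing to the centralizer of this pair modulo its centre reduces $d_0$ by $2$ while multiplying $\dim M$ by $p$; iterating produces the factor $p^{d_0/2}$.

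The odd side is the genuinely super ingredient. Here I would use that for an odd $x$ one has $x^{2}=\hf[x,x]$ inside $U(\ggg)$, so an odd element behaves like a Clifford generator with $x^{2}=\hf\chi([x,x])$ scalar in $U_\chi(\ggg)$; there is no $[p]$-map to juggle, only the square. Given $d_1>0$, a hyperbolic pair of odd elements relative to the symmetric block of $B$ generates a Clifford algebra on a two-dimensional nondegenerate quadratic space, isomorphic to $M_2(\dsk)$, which forces $2\mid\dim M$ and lets me strip off a two-dimensional orthogonal summand. Iterating yields $2^{\lfloor d_1/2\rfloor}$; when $d_1$ is odd a single odd generator with $x^2$ a nonzero scalar survives at the end and contributes no further factor, which is precisely the source of the floor. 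Combining with the even contribution gives $p^{\frac{d_0}{2}}2^{\lfloor\frac{d_1}{2}\rfloor}\mid\dim M$.

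I expect the main obstacle to lie in making the reduction steps rigorous over the \emph{non-semisimple} reduced enveloping algebra: one must verify that passing from $\ggg$ to the smaller centralizer-quotient genuinely multiplies $\dim M$ by exactly the advertised factor, which requires tracking how $M$ decomposes over the Heisenberg, respectively Clifford, subalgebra and choosing the restricted $[p]$-structure so that $x^{[p]}$ is controlled in the even steps. The compatibility of the $\bbz_2$-grading with the restricted structure (Proposition \ref{Dynkingradingprop}(3)--(4)) should keep these choices coherent, but the odd-dimensional orthogonal case, which produces the asymmetry encoded by $\lfloor d_1/2\rfloor$, is where the bookkeeping is most delicate.
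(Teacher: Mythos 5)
There is a genuine gap, and it sits exactly where you suspected it would. Your two ``model computations'' presuppose that the pair $x,y$ with $\chi([x,y])=1$ generates inside $U_\chi(\ggg)$ a copy of the reduced Weyl algebra $\dsk\langle x,y\rangle/([x,y]-1,\dots)$, and that an odd hyperbolic pair generates a rank-one Clifford algebra $\cong M_2(\dsk)$. But in $U_\chi(\ggg)$ the only relations imposed are $z^p-z^{[p]}=\chi(z)^p$ for \emph{even} $z$; the bracket $[x,y]$ (resp.\ the anticommutator of two odd elements) is an honest element of $\ggg_{\bar 0}$, not the scalar $\chi([x,y])$. For the Weyl/Clifford subalgebra to exist you would need $[x,y]$ to be central in the subalgebra generated by $x,y$ and to act by a nonzero scalar on $M$ --- i.e.\ you need a genuine Heisenberg (resp.\ Clifford) triple with $\chi$ nonvanishing on its centre, and such triples are not produced by merely choosing a hyperbolic pair for the form $B$. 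Worse, even granting one such step, the inductive descent ``pass to the centralizer of the pair modulo its centre'' does not close up: the commutant of the $M_p(\dsk)$-factor in $U_\chi(\ggg)$ is not the reduced enveloping algebra of a Lie subalgebra with $d_0$ decreased by $2$, so there is no smaller instance of the same statement to which to apply the induction hypothesis. This is precisely the obstruction that kept the classical Kac--Weisfeiler conjecture open for decades; a naive symplectic peeling argument does not prove it.

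Note also that the paper itself offers no proof of this proposition --- it is quoted from Wang--Zhao \cite{WZ}, Theorem 4.3 --- and the proof there takes a structurally different route, which is the one that actually works: using the Dynkin grading attached to the nilpotent element $e$ (Proposition \ref{Dynkingradingprop}), one builds a single nilpotent graded subalgebra $\mathfrak{m}=\bigoplus_{k\leqslant -2}\ggg(k)\oplus\ggg(-1)'$, where $\ggg(-1)'$ is Lagrangian in $\ggg(-1)_{\bar 0}$ and maximal isotropic in $\ggg(-1)_{\bar 1}$ for the form $\chi([\cdot,\cdot])$; then $\chi$ vanishes on $[\mathfrak{m},\mathfrak{m}]$, $\underline{\dim}\,\mathfrak{m}=(\frac{d_0}{2},\lfloor\frac{d_1}{2}\rfloor)$ by \eqref{dim}, and the key theorem is that every $U_\chi(\ggg)$-module is \emph{free} over $U_\chi(\mathfrak{m})$, whose dimension is $p^{\frac{d_0}{2}}2^{\lfloor\frac{d_1}{2}\rfloor}$. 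The divisibility falls out of that freeness in one stroke; the floor $\lfloor\frac{d_1}{2}\rfloor$ arises because a maximal isotropic subspace of an odd-dimensional symmetric space has dimension $\lfloor\frac{d_1}{2}\rfloor$, not from a leftover Clifford generator at the end of an induction. Your preliminary analysis of the form $B$ (identification of its radical with $\ggg^e$, evenness of $d_0$, orthogonality of the even and odd blocks) is correct and is indeed part of the real proof, but the reduction machinery built on top of it does not function as described.
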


In virtue of this result, the super Kac-Weisfeiler property with any $p$-characters of $\ggg$ was also formulated by them in \cite[Theorem 5.6]{WZ}, as cited as below:

\begin{prop}(\cite{WZ})\label{wzd2}
Let $\ggg$ be a basic Lie superalgebra, and let $\xi$ be arbitrary $p$-character in $\ggg^*_{\bar0}$. Then the dimension of every $U_\xi(\ggg)$-module $M$ is divisible by $p^{\frac{d_0}{2}}2^{\lfloor\frac{d_1}{2}\rfloor}$.
\end{prop}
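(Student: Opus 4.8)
The plan is to deduce the general statement from the nilpotent case (Proposition \ref{wzd}) by a Jordan-decomposition argument combined with the super analogue of the Kac--Weisfeiler Morita equivalence, reducing $U_\xi(\ggg)$ to the reduced enveloping algebra of the centralizer of the semisimple part of $\xi$. First I would fix the even non-degenerate supersymmetric invariant form $(\cdot,\cdot)$ on $\ggg$ (available by \cite[Proposition 2.1]{ZS}) and use it to identify $\ggg^*_{\bz}\cong\ggg_{\bz}$; write $x\in\ggg_{\bz}$ for the element representing $\xi$. Since $\ggg_{\bz}$ is a restricted Lie algebra, $x$ has a Jordan decomposition $x=x_s+x_n$ with $x_s$ semisimple, $x_n$ nilpotent and $[x_s,x_n]=0$; set $\xi_s=(x_s,\cdot)$, $\xi_n=(x_n,\cdot)$, so $\xi=\xi_s+\xi_n$. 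Put $\mathfrak{h}:=\ggg^{\xi_s}=\ggg^{x_s}$. As $x_s$ is even and semisimple, $\operatorname{ad}x_s$ acts semisimply and gives $\ggg=\mathfrak{h}\oplus\bigl(\bigoplus_{\alpha\ne0}\ggg_\alpha\bigr)$; the form pairs $\ggg_\alpha$ non-degenerately with $\ggg_{-\alpha}$ and respects parity, so $\dim\ggg_i-\dim\mathfrak{h}_i$ is even for each $i\in\bbz_2$ and the form restricts non-degenerately to $\mathfrak{h}$.

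Next I would record two bookkeeping facts. Using invariance of the form one checks $\ggg^\xi=\ggg^x=\mathfrak{h}^{x_n}=\mathfrak{h}^{\bar\xi}$, where $\bar\xi:=\xi|_{\mathfrak{h}}$; the last equality uses that $\bar\xi_s=\xi_s|_{\mathfrak{h}}$ is central in $\mathfrak{h}$ and so does not affect the stabilizer. Hence $\dim\ggg^\xi_i=\dim\mathfrak{h}^{\bar\xi}_i$, and writing $d_i$ (resp.\ $\bar d_i$) for the invariants attached to $(\ggg,\xi)$ (resp.\ $(\mathfrak{h},\bar\xi)$) one gets $d_i=\bar d_i+(\dim\ggg_i-\dim\mathfrak{h}_i)$. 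Since $\dim\ggg_i-\dim\mathfrak{h}_i$ is even, this yields
\begin{equation*}
p^{\frac{d_0}{2}}2^{\lfloor\frac{d_1}{2}\rfloor}
= p^{\frac{\dim\ggg_{\bz}-\dim\mathfrak{h}_{\bz}}{2}}\,2^{\frac{\dim\ggg_{\bo}-\dim\mathfrak{h}_{\bo}}{2}}\cdot p^{\frac{\bar d_0}{2}}2^{\lfloor\frac{\bar d_1}{2}\rfloor}.
\end{equation*}

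The core step is the super Kac--Weisfeiler Morita equivalence. Choosing a polarization $\bigoplus_{\alpha\ne0}\ggg_\alpha=\mathfrak{n}^-\oplus\mathfrak{n}^+$ into the sums of negative/positive $\operatorname{ad}x_s$-eigenspaces, one has $\xi(\mathfrak{n}^\pm)=0$ because $x\in\mathfrak{h}$ is $\operatorname{ad}x_s$-fixed. Following the super version of Kac--Weisfeiler's argument (cf.\ \cite{WZ}) I would show that $U_\xi(\ggg)$ is Morita equivalent to $U_{\bar\xi}(\mathfrak{h})$, with module dimensions scaling by the factor $p^{(\dim\ggg_{\bz}-\dim\mathfrak{h}_{\bz})/2}2^{(\dim\ggg_{\bo}-\dim\mathfrak{h}_{\bo})/2}$; here the $p$-power comes from the even nilradical directions (where $U_0(\mathfrak{n}^+_{\bz})$ is a local algebra of dimension $p^{\dim\mathfrak{n}^+_{\bz}}$) and the $2$-power from the odd directions. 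Finally, since $\bar\xi_s$ is a central semisimple character of $\mathfrak{h}$, twisting by it gives a dimension-preserving bijection between $U_{\bar\xi}(\mathfrak{h})$-modules and $U_{\bar\xi_n}(\mathfrak{h})$-modules, where $\bar\xi_n=\xi_n|_{\mathfrak{h}}$ is nilpotent; applying Proposition \ref{wzd} to the basic Lie superalgebra $\mathfrak{h}$ with the nilpotent character $\bar\xi_n$ shows that every such module has dimension divisible by $p^{\bar d_0/2}2^{\lfloor\bar d_1/2\rfloor}$. Combining this with the displayed identity gives the claim.

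The main obstacle will be the Morita equivalence together with the precise determination of the dimension factor --- in particular forcing the odd directions to contribute exactly $2^{(\dim\ggg_{\bo}-\dim\mathfrak{h}_{\bo})/2}$ rather than a $p$-power, which requires analysing the enveloping algebra of the odd nilradical as a Clifford-type algebra. Secondary points to nail down are that $\mathfrak{h}=\ggg^{\xi_s}$ is again a basic Lie superalgebra (a Levi subalgebra), so that Proposition \ref{wzd} genuinely applies, and that the central-character twist by $\bar\xi_s$ preserves module dimensions.
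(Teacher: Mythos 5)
This proposition is not proved in the paper at all: it is quoted verbatim from \cite{WZ} (Theorem 5.6 there), so there is no in-paper proof to compare against. Your outline is, in substance, a correct reconstruction of Wang--Zhao's actual argument, and it is the same machinery the paper itself recalls and redeploys in \S\ref{3} to prove Theorem \ref{main4}: Jordan decomposition $\xi=\xi_s+\xi_n$, reduction to the centralizer $\mathfrak{l}=\ggg^{x_s}$ via the super Kac--Weisfeiler Morita equivalence (\cite[Theorem 5.2]{WZ}, used in \eqref{gp}), the dimension bookkeeping \eqref{gtol}, and the nilpotent case (Proposition \ref{wzd}). Two caveats. First, the entire analytic content is the Morita equivalence with its exact dimension factor $p^{\dim\mathfrak{u}_{\bar0}}2^{\dim\mathfrak{u}_{\bar1}}$; you correctly flag this as the main obstacle, but as written your text only defers to \cite{WZ} for it, so the proposal is an outline of the reduction rather than a proof. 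Second, your assertion that $\mathfrak{h}=\ggg^{x_s}$ is ``again a basic Lie superalgebra'' is not accurate: by \cite[Proposition 5.1]{WZ} it is a direct sum of basic Lie superalgebras together with a toral summand, so Proposition \ref{wzd} does not apply to $\mathfrak{h}$ directly; one must apply it to each basic summand, treat $U_0(\mathfrak{t})$ separately (it is semisimple with one-dimensional irreducibles), and then assemble the divisibility --- exactly the decomposition carried out in the paper's Lemma \ref{d'}. Your central-character twist by $\bar\xi_s$ is a legitimate alternative to the paper's observation that $\xi|_{\mathfrak{l}}$ is already nilpotent on the semisimple part, but it too requires the summand-by-summand analysis to invoke the nilpotent case legitimately.
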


\begin{rem}\label{even}
(1) Although the general linear Lie superalgebras are not of basic Lie superalgebras, Proposition~\ref{wzd} and Proposition~\ref{wzd2} are still valid for $\mathfrak{gl}_{m|n}$.

(2) In \cite[\S3.2]{WZ}, Wang-Zhao gave an explicit description for the Dynkin gradings of $\ggg=\mathfrak{gl}_{m|n}$. In particular, they computed the dimension of $(\mathfrak{gl}_{m|n})^e_{\bar1}$ for any even nilpotent element $e\in(\mathfrak{gl}_{m|n})_{\bar0}$ and showed that it is always an even number. As the dimension of $(\mathfrak{gl}_{m|n})_{\bar1}$ is always even, by the definition one knows that $d_1$ is also an even number, i.e. $\lfloor d_1\rfloor=d_1$. The same discussion also works for $\ggg=\mathfrak{sl}_{m|n}$.
\end{rem}

\section{Pyramids for $\mathfrak{gl}_{m|n}$}\label{2}
In this section we will first recall some material on the pyramids for $\ggg:=\mathfrak{gl}_{m|n}$. One can refer to the references \cite{BG3}, \cite{EK}, \cite{H}  and \cite{Peng3} for more details on pyramids. We will then introduce the so-called ``shifted pyramids" associated to the Dynkin pyramids of $\mathfrak{gl}_{m|n}$ which will be crucially useful to the sequent arguments.

\subsection{Pyramids for $\mathbb{Z}$-gradings of $\mathfrak{gl}_{m|n}$}\label{ordinarypyramid}
A pyramid $P$ is a diagram, which is by the definition a shaped collection of finite boxes of size $2\times2$ in the upper half plane satisfying: (1) all boxes are centered at integer coordinates; (2) for each $j=1,\cdots,N$ (the number of boxes of $P$), the second coordinates of the $j${th} row equal $2j-1$ and the first coordinates of
the $j${th} row form an arithmetic progression $f_j ,f_{j+2},\cdots,l_j$ with difference $2$; (3) the first row is centered at $(0,0)$, i.e. $f_1=-l_1$, and $$f_j\leqslant f_{j+1}\leqslant l_{j+1}\leqslant l_j$$ for all $j$. We will further endow each row of boxes of $P$ with parity associated with nilpotent element $e\in (\gl_{m|n})_\bz$.

At first, we say that $P$ has size $(m|n)$ if $P$ has exactly $m$ even boxes and $n$ odd boxes. Fix $m,n\in\mathbb{Z}_+$ and let $(r,q)$ be a partition of $(m|n)$.  Let $z=\psi(r,q)\in Par(m+n)$ be the total ordered combination of partitions $r\vdash m$ and $q\vdash n$ with ordering ``$\geqslant$" (and also ``$>$") which satisfies: (1) both orders on $r$ and $q$ are preserved in $z$; (2) if $r_i=q_j$ for some $i,j$ then $\psi(q_j)>\psi(r_i)$. We define $Pyr(r,q)$ to be the set of pyramids which satisfy the following two conditions: (3) the $j${th} row of a pyramid $P\in Pyr(r,q)$ has length $z_j$, in connection to the partition $z=(z_1\geqslant z_2\geqslant\cdots \geqslant z_{s+t})\vdash m+n$; (4) if $\psi^{-1}(z_j)\in r$ (resp. $\psi^{-1}(z_j)\in q$), then all boxes in the $j${th} row have even (resp. odd) parity and we mark these boxes with a ``$+$'' (resp. ``$-$'') sign.

Let $I=\{\overline1<\cdots<\overline m<1<\cdots<n\}$ be an ordered index set. For the pyramid $P\in Pyr(r,q)$, we fix once and for all a numbering $\overline1,\overline2,\cdots,\overline{m}$ (resp. $1,2,\cdots,n$) of the boxes with a ``$+$'' (resp. ``$-$'') sign, from left to right along a row, then the next row up.
Let $\row(i)$ and $\col(i)$ denote the row and column numbers of the $i$th box.

Writing $e_{i,j}$ for the $(i,j)$-matrix unit, we define a nilpotent element $e(P)=\sum\limits_{i,j}e_{i,j}\in(\gl_{m|n})_{\bar0}$ with $i,j$ summing over all the pairs in $\{\overline1,\cdots,\overline m,1,\cdots,n\}$ such that $\row(i) = \row(j)$ and $\col(j)-\col(i)=2$. Then $e(P)$ is a nilpotent matrix, exactly of standard Jordan form associated with the pair of partitions $(r,q)$. Since $e(P)$ does not depend on the choice of $P$ in $Pyr(r,q)$ by the definition, {\sl{we may denote it by $e_{r,q}$}}. Moreover, $e_{r,q}\in\ggg_{\bar0}$ since all the boxes in the same row have the same parity in the super structure of $\ggg$.

Furthermore, for a given pyramid $P\in Pyr(r,q)$ there is an associated $\mathbb{Z}$-graded structure on $\ggg$: $\ggg=\bigoplus_{k\in\mathbb{Z}}\ggg^P(k)$ such that $\ggg^P(k)$ is a $\bbk$-span of those matrix units of degree $k$ in the sense that the $(i,j)$-matrix unit $e_{i,j}$ is of degree $\col(j)-\col(i)$. Obviously we have $-|m+n|\leqslant\text{deg}\,e_{i,j}\leqslant|m+n|$.  Moreover, associated to the pyramid $P$ one can define a parabolic subalgebra $\mathfrak{p}=\sum\limits_{\begin{subarray}{c}i,j\in\{\overline1,\cdots,\overline m,1,\cdots,n\},\\ \col(i)\leqslant \col(j)\end{subarray}}\bbk e_{i,j}$. It can be observed that $\mathfrak{p}$ is a restricted subalgebra of $\ggg$.

For the case when the pyramid $P\in Pyr(r,q)$ is symmetric (along the vertical line across $(0,0)$), the associated $\mathbb{Z}$-grading coincides with the Dynkin grading (\ref{Dynkingrading}) by the definition. We restate it as below.

\begin{lemma} \label{two gradings} Let $e_{r,q}$ be the nilpotent matrix of standard Jordan form of type $(r,q)$ defined as above. Then the corresponding Dynkin grading of $\ggg$ arising from $e_{r,q}$ coincides with the one arising from the symmetric pyramid of type $(r,q)$.
  \end{lemma}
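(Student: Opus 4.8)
The plan is to show that the two $\bbz$-gradings are induced by one and the same cocharacter of $\mbox{GL}(V)_\ev$, equivalently by the same diagonal semisimple element, after suitably matching the Jordan basis of $e_{r,q}$ with the boxes of the symmetric pyramid $P$. Recall that the Dynkin grading is the eigenspace decomposition of $\mbox{ad}\,h_{\mathrm{D}}$, where $h_{\mathrm{D}}$ acts on the Jordan basis vector $e^j v_s$ (resp.\ $e^l u_t$) by the scalar $2j+1-q_m(s)$ (resp.\ $2l+1-q_n(t)$); while the pyramid grading is the eigenspace decomposition of $\mbox{ad}\,h_P$ with $h_P=-\sum_i \col(i)\,e_{i,i}$, since the matrix unit $e_{i,j}$ has pyramid-degree $\col(j)-\col(i)=(-\col(i))-(-\col(j))$. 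Thus it suffices to identify $h_{\mathrm{D}}$ with $h_P$ under an appropriate labelling of the basis.

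First I would fix the correspondence between the two bases. For each row of $P$ of length $q$, the boxes read from right to left form a Jordan string for $e_{r,q}=e(P)$: taking the rightmost box as the cyclic generator $v_s$ (resp.\ $u_t$), I identify the box at column $q-1-2j$ with $e^j v_s$ (resp.\ $e^l u_t$). One checks directly from the definition $e(P)=\sum e_{i,j}$ (the sum over $\row(i)=\row(j)$ and $\col(j)-\col(i)=2$) that $e(P)$ sends the vector at a box to the vector at the box two columns to its left, so that $e(P)$ acts as the raising operator $e^j v_s\mapsto e^{j+1}v_s$ on this basis, with $e(P)$ annihilating the leftmost box just as $e^{q}v_s=0$. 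Hence the $e^jv_s,\,e^l u_t$ genuinely form a Jordan basis of $e_{r,q}$ of the prescribed shape $(r,q)$, and it is compatible with the $\bbz_2$-grading because every box in a given row carries a single parity.

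Next I would compare the weights. Because $P$ is symmetric, each row of length $q$ is centred at $0$ and its column coordinates are exactly $\{1-q,3-q,\cdots,q-1\}$; consequently the box identified with $e^j v_s$ sits at column $q-1-2j$, so that $-\col=2j+1-q$, which is precisely the Dynkin weight of $e^jv_s$, and likewise for the odd strings. Therefore $h_P=-\sum_i\col(i)\,e_{i,i}$ coincides with $h_{\mathrm{D}}$ entry by entry, so the two cocharacters agree and $\ggg(k)=\ggg^P(k)$ for every $k$; equivalently, $e_{i,j}\in\ggg(k)$ iff $\col(i)-\col(j)=-k$ iff $\col(j)-\col(i)=k$ iff $e_{i,j}\in\ggg^P(k)$.

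The main obstacle is the orientation and sign bookkeeping: one must line up the Jordan basis with the boxes so that $e_{r,q}$ equals the pyramid nilpotent $e(P)$ (rather than its transpose) and so that the Dynkin weight $2j+1-q$ matches $-\col$ rather than $+\col$. It is exactly the symmetry of the pyramid that renders the multiset of column coordinates in each row invariant under $c\mapsto -c$ and equal to $\{2j+1-q:0\leqslant j<q\}$; this is the precise point where the hypothesis is used, and it is what distinguishes the Dynkin (symmetric) grading from the shifted gradings attached to non-symmetric pyramids. Once the correspondence is fixed, the remaining verification is a routine identification of diagonal weights.
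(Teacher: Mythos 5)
Your proof is correct and takes essentially the same route as the paper's: the paper simply invokes the cocharacter $\tau_{r,q}=\tau_r\times\tau_q$ from \S\ref{1.1} and asserts that the identification follows ``by the matrix computation.'' Your write-up is exactly that computation carried out explicitly --- matching the Jordan string $e^jv_s$ with the boxes of the symmetric row and checking that the Dynkin weight $2j+1-q$ equals $-\col$, so that both gradings are the $\mathrm{ad}$-eigenspace decomposition of the same diagonal element.
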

  \begin{proof}
 Actually, 
  one has a cocharacter $\tau_{r,q}:=\tau_r\times\tau_q$ as presented in \S\ref{1.1}, which directly defines the Dynkin grading of $\ggg$ for the given nilpotent element $e_{r,q}$. Then the result can be obtained readily by the matrix computation.
  \end{proof}

Henceforth, for a given partition $(r,q)$, {\sl{ the symmetric pyramid associated to which will be called a Dynkin one, and denote it by $P_{r,q}$.}}

\subsection{Shifted pyramids for $\mathfrak{gl}_{m|n}$}\label{shiftedpyramid}
Now we construct a new pyramid from the Dynkin pyramid as follows: for a given Dynkin pyramid $P_{r,q}$, shift some rows leftward by one unit as long as those rows admit the lengths whose parity is different from the one of the length of the first row, then one can obtain a new pyramid by the definition, and denote it by $P'$. We will call such pyramid $P'$ resulted from the shifting carried above ``the shifted pyramid" of the Dynkin pyramid $P_{r,q}$. Obviously, we have $P'\in Pyr(r,q)$.

For each box of $P_{r,q}$ (resp. $P'$) filled with number $i\in\{\overline1,\cdots,\overline m,1,\cdots,n\}$, denote by $\row(i)$ (resp. $\row'(i)$) and $\col(i)$ (resp. $\col'(i)$) the row and column numbers of the $i${th} box.
Without loss of generality, we might as well assume that $r_1\geqslant q_1$ for simplicity. 
Then we can easily conclude that:
\begin{itemize}
\item[(1)] $\col'(i)=\col(i)$
   in the case when the length of $\row(i)${th} row is of the same parity as that of $r_1$.  By a straightforward calculation, it can be easily seen that $\col(i)=\col'(i)$ must admit a parity different from that of $r_1$ in this case.
\item[(2)] $\col'(i)=\col(i)-1$ in the case when 
the length of $\row(i)${th} row admits the parity different from that of $r_1$. Then $\col(i)$ must have the same parity as that of $r_1$. Hence $\col'(i)$ must have the different parity as that of $r_1$.
\end{itemize}
Summing up, we conclude that for any $i\in\{\overline1,\cdots,\overline m,1,\cdots,n\}$, $\col'(i)$ always admits the different parity from that of $r_1$ in the shifted pyramid $P'$.  On the other hand, for both degrees in the same sense as before, we have
 \begin{align*}\text{deg}\,e_{i,j}&=\col(j)-\col(i), &
  \text{deg}'e_{i,j}&=\col'(j)-\col'(i),
  \end{align*}
 associated with the pyramids $P_{r,q}$ and $P'$ respectively. Then we can define both gradations of $\ggg$ associated with the Dynkin pyramid and its shifted one respectively. By the remark just summarised, $\text{deg}'e_{i,j}$, the degree of any $(i,j)$-matrix unit $e_{i,j}$ associated with the shifted pyramid $P'$, is always even. Generally,  we call {\sl{a pyramid to be even if the degrees of the $(i,j)$-matrix unit $e_{i,j}$ associated with this pyramid are all even for $i, j\in\{\overline1,\cdots,\overline m,1,\cdots,n\}$}}. Hence, we have
\begin{lemma} \label{evenlem} Any shifted pyramid is always an even pyramid.
\end{lemma}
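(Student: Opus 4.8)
The plan is to reduce the claim directly to the parity computation for the column coordinates that was carried out immediately before the statement. By definition, $P'$ is an even pyramid exactly when every matrix-unit degree $\deg' e_{i,j} = \col'(j) - \col'(i)$ is even for all $i,j \in \{\overline 1, \ldots, \overline m, 1, \ldots, n\}$. Since each such degree is a difference of two column coordinates, it suffices to prove that all the $\col'(i)$ share a single common parity.

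First I would invoke the two-case description of the shift recorded above. If the box $i$ lies in a row whose length has the same parity as $r_1$, then that row is not translated, so $\col'(i) = \col(i)$, and this value has parity opposite to that of $r_1$. If instead $i$ lies in a row whose length has parity different from $r_1$, then that row is shifted leftward by one unit, so $\col'(i) = \col(i) - 1$ where $\col(i)$ has the same parity as $r_1$; consequently $\col'(i)$ again has parity opposite to that of $r_1$. In either case $\col'(i)$ is of the parity opposite to $r_1$, so all column coordinates in $P'$ lie in one and the same parity class.

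Then I would conclude immediately: for any pair $i,j$ the difference $\col'(j) - \col'(i)$ of two integers of equal parity is even, which is precisely the assertion that $\deg' e_{i,j}$ is even for every $i,j$, i.e. that $P'$ is even. Finally I would note that the running assumption $r_1 \geqslant q_1$ was adopted only for notational definiteness; the case $r_1 < q_1$ is obtained by exchanging the roles of $r_1$ and $q_1$ throughout, with no change to the argument.

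There is no substantial obstacle here: the entire content sits in the parity synchronization of the column coordinates, and that is exactly what the preceding case analysis delivers. The only point requiring care is the bookkeeping, namely verifying that shifting precisely those rows whose length differs in parity from $r_1$ (and no others) is what aligns every $\col'(i)$ to a common parity class; but this is guaranteed termwise by the two cases above, so the lemma follows at once.
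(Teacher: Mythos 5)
Your argument is correct and coincides with the paper's own reasoning: the case analysis showing that every $\col'(i)$ has parity opposite to that of $r_1$ is exactly the summary given in \S\ref{shiftedpyramid} just before the lemma, from which the evenness of every $\deg' e_{i,j}$ follows as a difference of integers of equal parity. Your additional remark on the case $r_1 < q_1$ is a harmless completeness point that the paper handles the same way by symmetry.
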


We continue to set
\begin{align}\label{pp'2}
 &\mathfrak{p}=\sum\limits_{\substack{
 i,j\in\{\overline1,\cdots,\overline m,1,\cdots,n\},\\\col(i)\leqslant \col(j)}}\bbk e_{i,j},&\mathfrak{p}'=\sum\limits_{\substack{
 i,j\in\{\overline1,\cdots,\overline m,1,\cdots,n\},\\\col'(i)\leqslant \col'(j)}}\bbk e_{i,j}.
 \end{align}
 Recall Lemma \ref{two gradings} shows that the grading arising from the Dynkin pyramid coincides with the Dynkin grading, thus $\mathfrak{p}=\bigoplus_{i\geqslant0}\ggg^{P_{r,q}}(i)$.
     On the other side, for the shifted pyramid $P'$ we have $\mathfrak{p}'=\bigoplus_{i\geqslant0}\ggg^{P'}(i)$ by the definition.

\begin{example}\label{expyr1}
Let $\ggg=\mathfrak{gl}_{4|3}$ be a general linear Lie superalgebra and consider the partitions $r=(3,1)$ and $q=(2,1)$, with the corresponding nilpotent element
$$e(r,q)=e_{\overline1,\overline2}+e_{\overline2,\overline3}+e_{1,2}\in\ggg(2)_{\bar0}.$$
  The Dynkin grading of $\ggg_{\bar{0}}\cong\mathfrak{gl}_4\oplus\mathfrak{gl}_3$ for the partition $(r,q)$ corresponds to the following symmetric pyramids:
\setlength{\unitlength}{5mm}
\begin{center}
\begin{picture}(3,2)(0,0)
\linethickness{1pt}
\put(0,0){\line(1,0){3}}
\put(0,1){\line(1,0){3}}
\put(1,2){\line(1,0){1}}
\put(0,0){\line(0,1){1}}
\put(1,0){\line(0,1){2}}
\put(2,0){\line(0,1){2}}
\put(3,0){\line(0,1){1}}
\put(1.5,0){\circle*{0.3}}
\put(0.15,0.25){$+$}
\put(1.15,0.25){$+$}
\put(2.15,0.25){$+$}
\put(1.15,1.25){$+$}
\end{picture}\qquad\qquad\qquad\qquad\qquad
\begin{picture}(3,2)(0,0)
\linethickness{1pt}
\put(0,0){\line(1,0){2}}
\put(0,1){\line(1,0){2}}
\put(0.5,2){\line(1,0){1}}
\put(0,0){\line(0,1){1}}
\put(0.5,1){\line(0,1){1}}
\put(1,0){\line(0,1){1}}
\put(1.5,1){\line(0,1){1}}
\put(2,0){\line(0,1){1}}
\put(1,0){\circle*{0.3}}
\put(0.15,0.25){$-$}
\put(1.15,0.25){$-$}
\put(0.65,1.25){$-$}
\end{picture}
\end{center}

Fill the boxes of a "$+$'' (resp. "$-$'') sign with numbers $\overline1, \overline2, \overline3, \overline4$ (resp. $1, 2, 3$) from left to right along a row, and repeat  upwards, then
\begin{center}
\begin{picture}(3,2)(0,0)
\linethickness{1pt}
\put(0,0){\line(1,0){3}}
\put(0,1){\line(1,0){3}}
\put(1,2){\line(1,0){1}}
\put(0,0){\line(0,1){1}}
\put(1,0){\line(0,1){2}}
\put(2,0){\line(0,1){2}}
\put(3,0){\line(0,1){1}}
\put(1.5,0){\circle*{0.3}}
\put(0.3,0.15){$\overline1$}
\put(1.3,0.15){$\overline2$}
\put(2.3,0.15){$\overline3$}
\put(1.3,1.15){$\overline 4$}
\end{picture}\qquad\qquad\qquad\qquad\qquad
\begin{picture}(3,2)(0,0)
\linethickness{1pt}
\put(0,0){\line(1,0){2}}
\put(0,1){\line(1,0){2}}
\put(0.5,2){\line(1,0){1}}
\put(0,0){\line(0,1){1}}
\put(0.5,1){\line(0,1){1}}
\put(1,0){\line(0,1){1}}
\put(1.5,1){\line(0,1){1}}
\put(2,0){\line(0,1){1}}
\put(1,0){\circle*{0.3}}
\put(0.3,0.25){$1$}
\put(1.3,0.25){$2$}
\put(0.8,1.25){$3$}
\end{picture}
\end{center}

Furthermore, we can construct pyramids for $Pyr(r,q)$, which are induced from the above pyramids, as follows
\begin{center}
\begin{picture}(3,4)(0,0)
\linethickness{1pt}
\put(0,0){\line(1,0){3}}
\put(0,1){\line(1,0){3}}
\put(0.5,2){\line(1,0){2}}
\put(1,3){\line(1,0){1}}
\put(1,4){\line(1,0){1}}
\put(0,0){\line(0,1){1}}
\put(0.5,1){\line(0,1){1}}
\put(1,0){\line(0,1){1}}
\put(1,2){\line(0,1){2}}
\put(1.5,1){\line(0,1){1}}
\put(2,0){\line(0,1){1}}
\put(2,2){\line(0,1){2}}
\put(2.5,1){\line(0,1){1}}
\put(3,0){\line(0,1){1}}
\put(1.5,0){\circle*{0.3}}
\put(0.15,0.25){$+$}
\put(1.15,0.25){$+$}
\put(2.15,0.25){$+$}
\put(0.65,1.25){$-$}
\put(1.65,1.25){$-$}
\put(1.15,2.25){$-$}
\put(1.15,3.25){$+$}
\end{picture}\qquad\qquad\qquad\qquad\qquad
\begin{picture}(3,4)(0,0)
\linethickness{1pt}
\put(0,0){\line(1,0){3}}
\put(0,1){\line(1,0){3}}
\put(0,2){\line(1,0){2}}
\put(1,3){\line(1,0){1}}
\put(1,4){\line(1,0){1}}
\put(0,0){\line(0,1){2}}
\put(1,0){\line(0,1){4}}
\put(2,0){\line(0,1){4}}
\put(3,0){\line(0,1){1}}
\put(1.5,0){\circle*{0.3}}
\put(0.15,0.25){$+$}
\put(1.15,0.25){$+$}
\put(2.15,0.25){$+$}
\put(0.15,1.25){$-$}
\put(1.15,1.25){$-$}
\put(1.15,2.25){$-$}
\put(1.15,3.25){$+$}
\end{picture}
\end{center}

One can easily observe that the first pyramid above corresponds to the Dynkin grading, and the second one is exactly the shifted pyramid induced from the first one. Now fill the boxes of a "$+$'' (resp. "$-$'') sign with numbers $\overline1,\overline2,\overline3,\overline4$ (resp. $1,2,3$), then
\begin{center}
\begin{picture}(3,4)(0,0)
\linethickness{1pt}
\put(0,0){\line(1,0){3}}
\put(0,1){\line(1,0){3}}
\put(0.5,2){\line(1,0){2}}
\put(1,3){\line(1,0){1}}
\put(1,4){\line(1,0){1}}
\put(0,0){\line(0,1){1}}
\put(0.5,1){\line(0,1){1}}
\put(1,0){\line(0,1){1}}
\put(1,2){\line(0,1){2}}
\put(1.5,1){\line(0,1){1}}
\put(2,0){\line(0,1){1}}
\put(2,2){\line(0,1){2}}
\put(2.5,1){\line(0,1){1}}
\put(3,0){\line(0,1){1}}
\put(1.5,0){\circle*{0.3}}
\put(0.3,0.15){$\overline1$}
\put(1.3,0.15){$\overline2$}
\put(2.3,0.15){$\overline3$}
\put(0.8,1.25){$1$}
\put(1.8,1.25){$2$}
\put(1.3,2.25){$3$}
\put(1.3,3.15){$\overline4$}
\end{picture}
\qquad\qquad\qquad\qquad\qquad
\begin{picture}(3,4)(0,0)
\linethickness{1pt}
\put(0,0){\line(1,0){3}}
\put(0,1){\line(1,0){3}}
\put(0,2){\line(1,0){2}}
\put(1,3){\line(1,0){1}}
\put(1,4){\line(1,0){1}}
\put(0,0){\line(0,1){2}}
\put(1,0){\line(0,1){4}}
\put(2,0){\line(0,1){4}}
\put(3,0){\line(0,1){1}}
\put(1.5,0){\circle*{0.3}}
\put(0.3,0.15){$\overline1$}
\put(1.3,0.15){$\overline2$}
\put(2.3,0.15){$\overline3$}
\put(0.3,1.25){$1$}
\put(1.3,1.25){$2$}
\put(1.3,2.25){$3$}
\put(1.3,3.15){$\overline4$}
\end{picture}
\end{center}
\begin{center}
$P_{r,q}$ \quad \qquad\qquad\qquad\qquad\qquad\quad $P'$
\end{center}

The parabolic subalgebras  $\ppp$ and $\ppp'$ associated with $P_{r,q}$ and $P'$ respectively can be easily listed. Consequently, $\ppp'=\ppp\oplus(\bbk e_{1\overline1}+\bbk e_{2\overline2}+\bbk e_{23}+\bbk e_{2\overline4})$.
\end{example}

\subsection{Parabolic subalgebras of shifted pyramids} We continue the arguments on the shifted pyramids. For a given nilpotent element $e\in\ggg_\bz=(\gl_{m|n})_\bz$, we know that it is uniquely represented by a  partition  $(r,q)=(r_1,r_2,\cdots,r_s;q_1,q_2,\cdots,q_t)$ of $(m|n)$ with $r_1\geqslant r_2\geqslant \cdots\geqslant r_s$ and $q_1\geqslant q_2\geqslant \cdots\geqslant q_t$, under the conjugation of $(\mbox{GL}_{m|n})_\ev=\mbox{GL}(m,\bbk)\times \mbox{GL}(n,\bbk)$. From now on, we always assume $e:=e_{r,q}$ is just a nilpotent matrix of Jordan canonical form of type $(r,q)$. In the sequent arguments, we still assume that $r_1\geqslant q_1$ for simplicity.

As discussed in \S\ref{shiftedpyramid}, associated with $e$ we can construct the Dynkin pyramid $P_{r,q}$ and its shifted one $P'$. Then we have

\begin{lemma}\label{eo} Let $i, j\in\{\overline1,\cdots,\overline m,1,\cdots,n\}$. For the partition $(r,q)=(r_1,r_2,\cdots,r_s;$\\$q_1,q_2,\cdots,q_t)$ of $(m|n)$ with $r_1\geqslant q_1$, the following statements hold:
\begin{itemize}
\item[(1)] if $\text{deg}\,e_{i,j}$ is even,
    then $\text{deg}'e_{i,j}=\text{deg}\,e_{i,j}$;
\item[(2)] if $\text{deg}\,e_{i,j}$ is odd,
then $\text{deg}'e_{i,j}=\text{deg}\,e_{i,j}\pm1$.
    \end{itemize}
\end{lemma}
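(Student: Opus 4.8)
\emph{Proof proposal.} The plan is to reduce everything to a single observation about how the individual column coordinates move under the shift, and then to play this off against the evenness of the shifted grading already recorded in Lemma~\ref{evenlem}.

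First I would set $\delta(i):=\col'(i)-\col(i)$ for each box $i\in\{\overline1,\cdots,\overline m,1,\cdots,n\}$. The two cases itemized in \S\ref{shiftedpyramid} say precisely that every box is either left fixed or translated leftward by exactly one unit, so $\delta(i)\in\{0,-1\}$ for all $i$. Since $\deg\,e_{i,j}=\col(j)-\col(i)$ and $\deg'e_{i,j}=\col'(j)-\col'(i)$, subtracting the two expressions gives
\begin{equation*}
\deg'e_{i,j}-\deg\,e_{i,j}=\delta(j)-\delta(i),
\end{equation*}
and because each of $\delta(i),\delta(j)$ lies in $\{0,-1\}$, this difference always lies in $\{-1,0,1\}$.

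Next I would invoke Lemma~\ref{evenlem}, which guarantees that $\deg'e_{i,j}$ is even for all $i,j$. The two assertions then follow by a parity count on the displayed identity. If $\deg\,e_{i,j}$ is even, then $\delta(j)-\delta(i)=\deg'e_{i,j}-\deg\,e_{i,j}$ is a difference of two even integers, hence even; being also confined to $\{-1,0,1\}$, it must vanish, giving $\deg'e_{i,j}=\deg\,e_{i,j}$ and proving~(1). If instead $\deg\,e_{i,j}$ is odd, then the same difference is odd, so within $\{-1,0,1\}$ it equals $\pm1$, which is exactly $\deg'e_{i,j}=\deg\,e_{i,j}\pm1$ and proves~(2).

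There is no genuinely hard analytic step here; the only point requiring care is the bookkeeping behind $\delta(i)\in\{0,-1\}$, namely that no box is ever shifted by more than one unit nor shifted rightward, which is exactly the content of items~(1)--(2) of \S\ref{shiftedpyramid}, so I would simply cite those. If one preferred an argument not routed through Lemma~\ref{evenlem}, I would instead observe that in the Dynkin pyramid every column coordinate in a row of length $\ell$ has parity opposite to $\ell$, whence $\delta(i)=-1$ precisely when $\col(i)\equiv r_1\pmod 2$; a direct case analysis on the parities of $\col(i)$ and $\col(j)$ then pins down $\delta(j)-\delta(i)$ on the nose. Either route closes the proof, and I expect the evenness shortcut to be the cleanest to write.
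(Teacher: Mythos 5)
Your proposal is correct, and it takes a genuinely different (and slicker) route than the paper's own proof. The paper proves the lemma by running the full case analysis: it splits case (1) into the two sub-cases where $\col(i)$ and $\col(j)$ both differ in parity from $r_1$ (neither box moves) or both agree (both move left by one, so the shifts cancel), and splits case (2) into the two sub-cases where exactly one of the boxes moves, computing $\col'$ explicitly each time. You instead extract only the crude fact $\delta(i):=\col'(i)-\col(i)\in\{0,-1\}$ from the two itemized observations of \S\ref{shiftedpyramid}, deduce $\deg'e_{i,j}-\deg\,e_{i,j}=\delta(j)-\delta(i)\in\{-1,0,1\}$, and then let Lemma~\ref{evenlem} (evenness of the shifted grading) force the conclusion by a parity count. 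This is not circular: Lemma~\ref{evenlem} is established in \S\ref{shiftedpyramid} from the observation that every $\col'(i)$ has parity opposite to $r_1$, independently of the present lemma. What your shortcut buys is brevity; what the paper's explicit case analysis buys is the finer information of \emph{which} sign occurs in part (2) as a function of the parities of $\col(i)$ and $\col(j)$ --- information the lemma's statement does not require, but which is of the same nature as the bookkeeping the paper reuses in Step~2 of the proof of Proposition~\ref{pp'} when splitting $\ggg^{P_{r,q}}(-1)$ into $\mathfrak{l}_1\oplus\mathfrak{l}_2$. Your proposed alternative route at the end (tracking $\delta(i)=-1$ exactly when $\col(i)\equiv r_1\pmod 2$) is essentially the paper's argument.
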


\begin{proof}
First recall that $\text{deg}\,e_{i,j}=\col(j)-\col(i)$ and $\text{deg}'e_{i,j}=\col'(j)-\col'(i)$. We proceed our arguments in different cases. 

(1) When $\text{deg}\,e_{i,j}$ is even, then $\col(i)$ and $\col(j)$ have the same parity. By the discussion in \textsection\ref{shiftedpyramid}, one can conclude that:

(i) if both $\col(i)$ and $\col(j)$ have the different parity from $r_1$, then $\col'(i)=\col(i)$ and $\col'(j)=\col(j)$. Thus $\text{deg}'e_{i,j}=\col(j)-\col(i)$, i.e. $\text{deg}'e_{i,j}=\text{deg}\,e_{i,j}$;

(ii) if both $\col(i)$ and $\col(j)$ have the same parity as $r_1$, then $\col'(i)=\col(i)-1$ and $\col'(j)=\col(j)-1$. Thus $\text{deg}'e_{i,j}=(\col(j)-1)-(\col(i)-1)=\col(j)-\col(i)$, i.e. $\text{deg}'e_{i,j}=\text{deg}\,e_{i,j}$.

(2) When $\text{deg}\,e_{i,j}$ is odd, then $\col(i)$ and $\col(j)$ have the different parity. We have the following consequence:

(i) if $\col(i)$ is of the different parity from $r_1$, and $\col(j)$ is of the same parity as $r_1$, then $\col'(i)=\col(i)$ and $\col'(j)=\col(j)-1$. Thus  $\text{deg}'e_{i,j}=(\col(j)-1)-\col(i)=\col(j)-\col(i)-1$, i.e. $\text{deg}'e_{i,j}=\text{deg}\,e_{i,j}-1$;

(ii) if $\col(i)$ is of the same parity as $r_1$,  and $\col(j)$ is of the different parity from $r_1$,  then $\col'(i)=\col(i)-1$ and $\col'(j)=\col(j)$. Thus $\text{deg}'e_{i,j}=\col(j)-(\col(i)-1)=\col(j)-\col(i)+1$, i.e. $\text{deg}'e_{i,j}=\text{deg}\,e_{i,j}+1$.
\end{proof}

Now we are in a position to introduce the main result in this section.

\begin{prop}\label{pp'}
For the partition $(r,q)=(r_1,r_2,\cdots,r_s;q_1,q_2,\cdots,q_t)$ of $(m|n)$, let $P_{r,q}$ and $P'$ be the Dynkin pyramid and its shifted one respectively.
Denote by $\mathfrak{p}$ and $\ppp'$ 
the parabolic subalgebras of $\mathfrak{gl}_{m|n}$ corresponding to the pyramids $P_{r,q}$ and $P'$ respectively.
Then we have \begin{equation}\label{keyequation}
\underline{\dim}\,\mathfrak{p}'=\underline{\dim}\,\mathfrak{p}+{1\over 2}\underline{\dim}\,\ggg^{P_{r,q}}(-1).
\end{equation}
\end{prop}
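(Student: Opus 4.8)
The plan is to compare the two parabolics on the level of their matrix-unit bases and read off the difference from Lemma \ref{eo}. Both $\ppp=\bigoplus_{i\geqslant 0}\ggg^{P_{r,q}}(i)$ and $\ppp'=\bigoplus_{i\geqslant 0}\ggg^{P'}(i)$ are spanned by matrix units, so I would fix the common basis $\{e_{i,j}\}$ and decide, for each pair $(i,j)$, whether $e_{i,j}$ lies in $\ppp$ (which happens iff $\deg e_{i,j}\geqslant 0$) and whether it lies in $\ppp'$ (iff $\deg' e_{i,j}\geqslant 0$). By Lemma \ref{eo}, if $\deg e_{i,j}$ is even then $\deg' e_{i,j}=\deg e_{i,j}$, so such a unit lies in $\ppp$ exactly when it lies in $\ppp'$ and contributes nothing to the difference.

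Next I would run through the odd-degree units. If $\deg e_{i,j}\geqslant 1$ then $\deg' e_{i,j}=\deg e_{i,j}\pm 1\geqslant 0$, so $e_{i,j}$ belongs to both $\ppp$ and $\ppp'$; if $\deg e_{i,j}\leqslant -3$ then $\deg' e_{i,j}\leqslant -2$, so it belongs to neither. Hence the only pairs that can change status are those with $\deg e_{i,j}=-1$: here $e_{i,j}\notin\ppp$, while $\deg' e_{i,j}\in\{0,-2\}$ by Lemma \ref{eo}, so $e_{i,j}\in\ppp'$ precisely when $\deg' e_{i,j}=0$, which is case (2)(ii), i.e. $\col(i)$ has the same parity as $r_1$. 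Consequently $\ppp\subseteq\ppp'$ as spans of matrix units, no unit is ever removed, and
\[
\underline{\dim}\,\ppp'-\underline{\dim}\,\ppp=\#\{\,e_{i,j}\in\ggg^{P_{r,q}}(-1):\col(i)\equiv r_1\pmod 2\,\},
\]
counted in each $\bbz_2$-degree separately.

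It then remains to identify this count with $\tfrac12\underline{\dim}\,\ggg^{P_{r,q}}(-1)$. Every degree $-1$ unit has $\col(j)=\col(i)-1$, so exactly one of $\col(i),\col(j)$ carries the parity of $r_1$; thus the degree $-1$ units split into the ``type A'' set counted above ($\col(i)\equiv r_1$) and a complementary ``type B'' set ($\col(i)\not\equiv r_1$). The key step, which I expect to be the crux of the argument, is to produce a parity-preserving bijection between these two sets. For this I would use the reflection of the symmetric (Dynkin) pyramid about the vertical axis $x=0$: since each row of $P_{r,q}$ occupies a column set symmetric about $0$, this reflection sends a box labelled $a$, sitting in some row at column $c$, to a box $a^{*}$ in the same row at column $-c$, and $a\mapsto a^{*}$ is an involution which preserves $\bbz_2$-parity because all boxes in a given row share one parity. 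The assignment $e_{i,j}\mapsto e_{j^{*},i^{*}}$ then preserves degree $-1$, and since $\col(j^{*})=-\col(j)=-(\col(i)-1)\equiv\col(i)+1\pmod 2$, it interchanges the conditions $\col(i)\equiv r_1$ and $\col(i)\not\equiv r_1$; hence it is an involution swapping type A and type B. This gives $\#A=\#B$ in each $\bbz_2$-degree, so $\#A=\tfrac12\underline{\dim}\,\ggg^{P_{r,q}}(-1)$, which is exactly \eqref{keyequation}. The only points requiring care are verifying that this reflection is genuinely well defined on boxes (using the symmetry of the Dynkin pyramid from \S\ref{ordinarypyramid}) and that it respects the super-grading, both of which follow from the structure recalled there.
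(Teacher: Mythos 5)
Your proof is correct, and its first half follows the same route as the paper's: both arguments work with the matrix-unit basis, use Lemma \ref{eo} to see that a unit $e_{i,j}$ can change status between $\ppp$ and $\ppp'$ only when $\deg e_{i,j}=-1$, and conclude that $\ppp'=\ppp\oplus\mathfrak{l}_1$, where $\mathfrak{l}_1$ is spanned by the degree $-1$ units that acquire degree $0$ in $P'$ (those with $\col(i)\equiv r_1\pmod 2$, your ``type A''). Where you genuinely diverge is at the crux, the equality $\underline{\dim}\,\mathfrak{l}_1=\frac{1}{2}\underline{\dim}\,\ggg^{P_{r,q}}(-1)$. The paper pairs each gaining unit $e_{i,j}$ with the losing unit $e_{j+1,i}$, where $j+1$ is the box immediately to the right of $j$ in the same row; this local pairing depends on the chosen numbering and requires checking that the box $j+1$ exists at column $\col(i)+1$ (which holds because the row of $j$ contains the column range of the row of $i$ and their lengths have opposite parities). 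You instead exploit the left--right symmetry of the Dynkin pyramid: the reflection $c\mapsto -c$ is a row-preserving, hence super-parity-preserving, involution on boxes, and $e_{i,j}\mapsto e_{j^{*},i^{*}}$ preserves degree $-1$ while flipping the parity of the first index's column, so it interchanges your sets $A$ and $B$ within each $\bbz_2$-degree. Both bijections are valid; yours is more global and sidesteps the adjacency bookkeeping, at the cost of not producing the explicit spanning sets for $\mathfrak{l}_1$ and $\mathfrak{l}_2$ that the paper's pairing yields. In either case the count is carried out separately in the even and odd parts, which is exactly what \eqref{keyequation} requires.
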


\begin{proof} Let $(r,q)=(r_1,r_2,\cdots,r_s;q_1,q_2,\cdots,q_t)$ be the partition of $(m|n)$. For simplicity of arguments, we assume that $r_1\geqslant q_1$ without loss of generality. Recall that $\text{deg}\,e_{i,j}=\col(j)-\col(i)$ and $\text{deg}'e_{i,j}=\col'(j)-\col'(i)$ for $i,j\in\{\overline1,\cdots,\overline m,1,\cdots,n\}$. Now we proceed the proof by steps.

Step 1: We claim that $\ppp\subseteq \ppp'$. Firstly recall that $\mathfrak{p}=\sum\limits_{\begin{subarray}{c}i,j\in\{\overline1,\cdots,\overline m,1,\cdots,n\}\\ \col(i)\leqslant \col(j)\end{subarray}}\bbk e_{i,j}$, and $\mathfrak{p}'=\sum\limits_{\overset{i,j\in\{\overline1,\cdots,\overline m,1,\cdots,n\}}{\col'(i)\leqslant \col'(j)}}\bbk e_{i,j}$. By Lemma \ref{eo},
\begin{align*}
\text{deg}'e_{i,j}=\begin{cases} \text{deg}\,e_{i,j}, & \mbox{ if }\deg\,{e_{i,j}} \mbox{ is even};\cr
  \text{deg}\,e_{i,j}\pm1, &\mbox{ if }\deg\,{e_{i,j}} \mbox{ is odd}.
  \end{cases}
  \end{align*}
   Hence, we have $e_{i,j}\in\mathfrak{p}'$ for all the pairs $i,j\in\{\overline1,\cdots,\overline m,1,\cdots,n\}$ satisfying $\col(j)-\col(i)\geqslant1$. Then it follows that $\bigoplus_{i\geqslant 1}\ggg^{P_{r,q}}(i)\subseteq \ppp'$.  
   Furthermore,  $\text{deg}'e_{i,j}=\text{deg}\,e_{i,j}$ when the $(i,j)$-matrix unit $e_{i,j}$ falls in $\ggg^{P_{r,q}}(0)$.    Combining this with the definition $$\ggg^{P_{r,q}}(0)=\sum\limits_{\begin{subarray}{c}i,j\in\{\overline1,\cdots,\overline m,1,\cdots,n\},\\ \col(i)=\col(j)\end{subarray}}\bbk e_{i,j},$$
      we have $\ggg^{P_{r,q}}(0)\subseteq \ppp'$. Hence we complete the proof of the claim $\ppp\subseteq \ppp'$.

 Step 2:  Next, recall that $\ggg^{P_{r,q}}(-1)=\sum\limits_{\begin{subarray}{c}i,j\in\{\overline1,\cdots,\overline m,1,\cdots,n\},\\ \col(j)-\col(i)=-1\end{subarray}}\bbk e_{i,j}$. By Lemma \ref{eo} we have the decomposition $\ggg^{P_{r,q}}(-1)=\mathfrak{l}_1\oplus\mathfrak{l}_2$, where $\mathfrak{l}_1$ denotes the $\mathds{k}$-span of those $e_{i,j}$ with $\col(j)-\col(i)=-1$ such that $\col'(j)-\col'(i)=0$ (i.e. $e_{i,j}\in\ggg^{P'}(0)$), and $\mathfrak{l}_2$ the $\mathds{k}$-span of those $e_{i,j}$ with $\col(j)-\col(i)=-1$ such that $\col'(j)-\col'(i)=-2$ (i.e. $e_{i,j}\in\ggg^{P'}(-2)$).

We claim that $\ppp'=\ppp\oplus \frak{l}_1$. In order to prove this, we first need to understand the spaces $\mathfrak{l}_1$ and $\mathfrak{l}_2$ explicitly.

Keep in mind that the shifted pyramid $P'$ is obtained from the Dynkin pyramid $P_{r,q}$ by the left translation by one unit of those rows whose lengths have the different parity from that of $r_1$. For each $i\in\{\overline1,\cdots,\overline m,1,\cdots,n\}$, denote by $l(i)$ the length of the row where the $i${th} box lies in. It is obvious that the pair $i,j\in\{\overline1,\cdots,\overline m,1,\cdots,n\}$ with $\col(j)-\col(i)=-1$ occurs only when $l(i)$ and $l(j)$ have the different parity.

Let $i\in\{\overline1,\cdots,\overline m,1,\cdots,n\}$ be the smallest number among all the numbering boxes which lie in the rows of lengths sharing the different parity from that of $r_1$. Consider the set $K$ consisting of the $k${th} boxes with $k\in\{\overline1,\cdots,\overline m,1,\cdots,n\}$ such that $\row(k)<\row(i)$. In the Dynkin pyramid $P_{r,q}$, pick the $j${th} box from $K$ satisfying  $\col(j)-\col(i)=-1$, which  
implies that both the $j$th and  the $(j+1)${th} boxes  lie in the same row,   below left and  below  right of the $i$th box respectively. Obviously, the elements $e_{i,j},e_{j+1,i}\in\ggg^{P_{r,q}}(-1)$ by the definition. Carrying left translation for the $\row(i)${th} row by one unit from $P_{r,q}$, it follows that $\col'(j)-\col'(i)=0$ (i.e. $\text{deg}'e_{i,j}=0$) and $\col'(i)-\col'(j+1)=-2$ (i.e. $\text{deg}'e_{j+1,i}=-2$) in the shifted pyramid $P'$. Then $e_{i,j}$ falls in $\mathfrak{l}_1$, and $e_{j+1,i}$ in $\mathfrak{l}_2$ by the definition.

Recall the ordered index set $I=\{\overline1<\cdots<\overline m<1<\cdots<n\}$. Generally, for any  box of number $k\geqslant i$, by the same arguments as above one can find all pairs $(k,l)$ with $l\leqslant k$ such that  $\text{deg}\,e_{k,l}=-1$ as well as $\text{deg}'e_{k,l}=0$ and $\text{deg}'e_{l+1,k}=-2$. This implies that  $e_{k,l},e_{l+1,k}\in\ggg^{P_{r,q}}(-1)$, with $e_{k,l}\in\ggg^{P'}(0)\subseteq\mathfrak{p}'$ and $e_{l+1,k}\in\ggg^{P'}(-2)$.  When $k$ runs over all the $j$'s such that $j\geqslant i$, we can see that  $\mathfrak{l}_1$ is exactly spanned by those $e_{k,l}$'s, and $\mathfrak{l}_2$ spanned by those  $e_{l+1,k}$'s. 
Consequently, all the discussion above and the claim in Step 1 show that $\ppp'\supseteq \ppp\oplus\frak{l}_1$, $\ppp'\cap\mathfrak{l}_2=\{0\}$, and
\begin{equation}\label{half}
\underline{\dim}\,\mathfrak{l}_1=\underline{\dim}\,\mathfrak{l}_2=\frac{\underline{\dim}\,\ggg^{P_{r,q}}(-1)}{2}. \end{equation}
On the other hand, Lemma \ref{eo} shows that $\text{deg}'e_{i,j}=\text{deg}\,e_{i,j}$ or $\text{deg}'e_{i,j}=\text{deg}\,e_{i,j}\pm1$ for the pairs $i, j\in\{\overline1,\cdots,\overline m,1,\cdots,n\}$. Hence $\ppp'\subseteq \ppp\oplus\ggg^{P_{r,q}}(-1)$. Summing up, we must have $\ppp'=\ppp\oplus\mathfrak{l}_1$, completing the proof of the claim in Step 2.

Combining \eqref{half} and the claim in Step 2, we complete the proof.
\end{proof}
\begin{example}
Retain the notations in Example~\ref{expyr1}. In the Dynkin Pyramid $P_{r,q}$, one has
$\mathfrak{p}=\mathfrak{p}_{\bar0}\oplus\mathfrak{p}_{\bar1}$ with
\begin{equation*}
\begin{array}{lll}
\mathfrak{p}_{\bar0}&=&\bbk e_{\overline1,\overline1}+{\bbk}e_{\overline1,\overline2}+{\bbk}e_{\overline1,\overline3}+{\bbk}e_{\overline1,\overline4}
+{\bbk}e_{\overline2,\overline2}+{\bbk}e_{\overline2,\overline3}+{\bbk}e_{\overline2,\overline4}+{\bbk}e_{\overline3,\overline3}
+{\bbk}e_{1,1}+\\
&&{\bbk}e_{1,2}+{\bbk}e_{1,3}+{\bbk}e_{2,2}+{\bbk}e_{3,2}+{\bbk}e_{3,3}+{\bbk}e_{\overline4,\overline2}+{\bbk}e_{\overline4,\overline3}
+{\bbk}e_{\overline4,\overline4},
\end{array}
\end{equation*}and
\begin{equation*}
\begin{array}{lll}
\mathfrak{p}_{\bar1}&=&{\bbk}e_{\overline1,1}+{\bbk}e_{\overline1,2}+{\bbk}e_{\overline1,3}
+{\bbk}e_{\overline2,2}+{\bbk}e_{\overline2,3}+{\bbk}e_{1,\overline2}+{\bbk}e_{1,\overline3}+{\bbk}
e_{1,\overline4}+{\bbk}e_{2,\overline3}+\\&&{\bbk}e_{3,\overline2}+{\bbk}e_{3,\overline3}+{\bbk}e_{3,\overline4}+{\bbk}
e_{\overline4,2}+{\bbk}e_{\overline4,3}.
\end{array}
\end{equation*}
Moreover,
\begin{equation*}
\begin{array}{lll}
\ggg^{P_{r,q}}(-1)_{\bar0}&=&{\bbk}e_{2,3}+{\bbk}e_{3,1};\\
\ggg^{P_{r,q}}(-1)_{\bar1}&=&{\bbk}e_{\overline2,1}+{\bbk}e_{\overline3,2}+{\bbk}e_{1,\overline1}+{\bbk}e_{2,\overline2}+
{\bbk}e_{2,\overline4}+{\bbk}e_{\overline4,1}.
\end{array}
\end{equation*}
One can easily calculate that $\underline{\dim}\,\mathfrak{p}=(17,14)$ and $\underline{\dim}\,\ggg^{P_{r,q}}(-1)=(2,6)$.

In the shifted pyramid $P'$, one has
$\mathfrak{p}'=\mathfrak{p}'_{\bar0}\oplus\mathfrak{p}'_{\bar1}$ with
\begin{equation*}
\begin{array}{lll}
\mathfrak{p}'_{\bar0}&=&{\bbk}e_{\overline1,\overline1}+{\bbk}e_{\overline1,\overline2}+{\bbk}e_{\overline1,\overline3}+{\bbk}
e_{\overline1,\overline4}+{\bbk}e_{\overline2,\overline2}+{\bbk}e_{\overline2,\overline3}+{\bbk}e_{\overline2,\overline4}+{\bbk}e_{\overline3,\overline3}
+{\bbk}e_{1,1}+\\&&{\bbk}e_{1,2}+{\bbk}e_{1,3}+{\bbk}e_{2,2}+
{\bbk}e_{2,3}+{\bbk}e_{3,2}+{\bbk}e_{3,3}+{\bbk}e_{\overline4,\overline2}+{\bbk}e_{\overline4,\overline3}+{\bbk}e_{\overline4,\overline4},
\end{array}
\end{equation*}and
\begin{equation*}
\begin{array}{lll}
\mathfrak{p}'_{\bar1}&=&{\bbk}e_{\overline1,1}+{\bbk}e_{\overline1,2}+{\bbk}e_{\overline1,3}
+{\bbk}e_{\overline2,2}+{\bbk}e_{\overline2,3}+{\bbk}e_{1,\overline1}+{\bbk}e_{1,\overline2}+{\bbk}e_{1,\overline3}
+{\bbk}e_{1,\overline4}+\\
&&{\bbk}e_{2,\overline2}+{\bbk}e_{2,\overline3}+{\bbk}e_{2,\overline4}+{\bbk}e_{3,\overline2}+{\bbk}e_{3,\overline3}+{\bbk}e_{3,\overline4}
+{\bbk}e_{\overline4,2}+{\bbk}e_{\overline4,3},
\end{array}
\end{equation*}
thus $$\underline{\dim}\,\mathfrak{p}'=\underline{\dim}\,\mathfrak{p}+\frac{\underline{\dim}\,\ggg^{P_{r,q}}(-1)}{2}=(18,17).$$
\end{example}

\begin{rem} With the restrictions that $p\nmid(m-n)$ and $p>2$ for the field $\mathds{k}=\overline{\mathbb{F}}_p$, all the arguments and conclusions in this section are still valid for $\mathfrak{sl}_{m|n}$, with a few modifications in the proof.
\end{rem}
\begin{rem}
We call a $\mathbb{Z}$-grading $\mathfrak{g}=\bigoplus_{i\in\mathbb{Z}}\mathfrak{g}(i)$ of $\mathfrak{g}$ good if it admits an element $e\in\mathfrak{g}(2)_{\bar0}$ satisfying the conditions (1) and (2) in Proposition \ref{Dynkingradingprop}.
Let $\ggg=\mathfrak{gl}_{m|n}$, and $(r,q)$ be a partition of $(m|n)$. Over the field of complex numbers, Hoyt showed that if $P$ is a pyramid from $Pyr(r,q)$ as defined in \S\ref{ordinarypyramid}, then the grading $\ggg=\bigoplus_{i\in\mathbb{Z}}\ggg^P(i)$ associated to $P$ is good (c.f. \cite[Theorem 7.2]{H}). Recall \cite[Corollary 4.7]{H} shows that any good grading of $\ggg$ coincides with the eigenspace decomposition of ad\,$H$ for some semisimple element $H\in\ggg$. By the same discussion as \cite[\S2.5]{W} for the Lie algebra case, we can further show that Proposition \ref{Dynkingradingprop}(3)-(6) hold for any good gradings of $\ggg$ over $\bbc$.

In fact, one can check that Proposition \ref{Dynkingradingprop} is still valid for the good gradings of $\mathfrak{gl}_{m|n}$ over the algebraically closed field $\mathds{k}=\overline{\mathbb{F}}_p$ in characteristic $p\gg 0$, and also \eqref{dim}. Then a parabolic subalgebra $\mathfrak{p}'$ of $\ggg$ satisfying \eqref{keyequation} can be formulated directly in this case, bypassing the construction of shifted pyramids. We can proceed as follows.

For each nilpotent matrix of Jordan canonical form $e=e_{r,q}$ of type $(r,q)$, one can construct a pyramid $P_{Y}$ to be the Young diagram, number the boxes in it by the same way as that in \S\ref{ordinarypyramid}, and define the corresponding grading of $\ggg$. This grading is good, and we call it the Young grading. It is obvious that the Young grading is even by the construction. Let $\mathfrak{p}_{Y}=\sum\limits_{\begin{subarray}{c}i,j\in\{\overline1,\cdots,\overline m,1,\cdots,n\},\\ \col(i)\leqslant \col(j)\end{subarray}}\bbk e_{i,j}$ be a parabolic subalgebra associated to $P_{Y}$. Since \eqref{dim} holds for the Young grading, direct computation shows that the restricted subalgebra $\mathfrak{p}_{Y}$ of $\ggg$ satisfies the equation \eqref{keyequation}.
\end{rem}

\section{On the existence of KW modules for $\mathfrak{gl}_{m|n}$ and $\mathfrak{sl}_{m|n}$}\label{3}
This section is the main part of the paper. In virtue of the consequences on the shifted pyramids in \S\ref{2}, we
will show that the lower bounds in the super KW property (see Proposition \ref{wzd} and Proposition \ref{wzd2}) for $\ggg=\mathfrak{gl}_{m|n}$ and $\mathfrak{sl}_{m|n}$ are accessible.

\subsection{The nilpotent $p$-character case}
We will firstly deal with the nilpotent case. In fact, we have
\begin{prop}\label{main2}
Let $\ggg=\mathfrak{gl}_{m|n}$ or $\mathfrak{sl}_{m|n}$ with the restriction that $p>2$, and additionally $p\nmid(m-n)$ for $\ggg=\mathfrak{sl}_{m|n}$. Fix a nilpotent $p$-character $\chi\in\ggg^*_{\bar0}$. Then the reduced enveloping algebra $U_\chi(\ggg)$  admits Kac-Weisfeiler modules, i.e. the irreducible modules of  dimension $p^{\frac{d_0}{2}}2^{\frac{d_1}{2}}$.
\end{prop}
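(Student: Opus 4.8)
The plan is to realize a Kac-Weisfeiler module as a parabolic induction from a one-dimensional module of the parabolic subalgebra $\ppp'$ attached to the shifted pyramid. Fix the nilpotent $e=e_{r,q}$ associated with $\chi$, so that $\chi(x)=(e,x)$, and recall from \S\ref{1.1} that $\chi(\ggg(k))=0$ for all $k\neq -2$. First I would observe that $\chi$ vanishes on $\ppp'$: by Proposition \ref{pp'} we have $\ppp'=\ppp\oplus\mathfrak{l}_1$ with $\ppp=\bigoplus_{k\geq 0}\ggg(k)$ and $\mathfrak{l}_1\subseteq\ggg^{P_{r,q}}(-1)=\ggg(-1)$, and $\chi$ kills both summands. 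Since $\ppp'$ is a restricted subalgebra, the trivial character $\la=0$ defines a one-dimensional $U_\chi(\ppp')$-module $\bbk_{\la}$: the defining relation $x^p-x^{[p]}-\chi(x)^p$ acts by $\la(x)^p-\la(x^{[p]})-\chi(x)^p=0$ for $x\in\ppp'_{\bar 0}$, while all odd elements act by $0$. Set $M:=U_\chi(\ggg)\otimes_{U_\chi(\ppp')}\bbk_\la$.

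Next I would compute $\dim M$. By the super PBW theorem (cf. \cite{WZ}), $U_\chi(\ggg)$ is free as a right $U_\chi(\ppp')$-module of rank $p^{\dim\ggg_{\bar 0}-\dim\ppp'_{\bar 0}}2^{\dim\ggg_{\bar 1}-\dim\ppp'_{\bar 1}}$, so $\dim M=p^{a_0}2^{a_1}$ where $(a_0,a_1)=\underline{\dim}\,\ggg-\underline{\dim}\,\ppp'$. Combining $\underline{\dim}\,\ggg-\underline{\dim}\,\ppp=\sum_{k\geq 1}\underline{\dim}\,\ggg(-k)$ with the key identity \eqref{keyequation} of Proposition \ref{pp'} and the dimension formula \eqref{dim}, a direct manipulation gives
\begin{equation*}
\underline{\dim}\,\ggg-\underline{\dim}\,\ppp'=\sum_{k\geq 2}\underline{\dim}\,\ggg(-k)+\tfrac12\underline{\dim}\,\ggg(-1)=\tfrac12\,(d_0,d_1).
\end{equation*}
Hence $\dim M=p^{d_0/2}2^{d_1/2}$, exactly the lower bound of Proposition \ref{wzd}. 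This is precisely the point of the leftward shift: induction from the Dynkin parabolic $\ppp$ overshoots by the factor coming from $\tfrac12\underline{\dim}\,\ggg(-1)$, and $\ppp'$ is engineered to absorb it.

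Finally I would deduce irreducibility from minimality. By Remark \ref{even} the integer $d_1$ is even, so the Kac-Weisfeiler bound equals $p^{d_0/2}2^{\lfloor d_1/2\rfloor}=p^{d_0/2}2^{d_1/2}=\dim M$. In any composition series of $M$ each irreducible factor has dimension divisible by this bound by Proposition \ref{wzd}, hence is at least $\dim M$; as the dimensions of the factors sum to $\dim M$, there is exactly one factor and $M$ is irreducible. Thus $M$ is a Kac-Weisfeiler module. The case $\ggg=\mathfrak{sl}_{m|n}$ with $p\nmid(m-n)$ follows by the same construction after the identification $\mathfrak{gl}_{m|n}=\mathfrak{sl}_{m|n}\oplus\bbk I_{m|n}$, valid since $\mathrm{str}\,I_{m|n}=m-n$ is invertible in $\bbk$, transporting the pyramid data and parabolics along the remarks in \S\ref{1.1} and \S\ref{2}.

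Granting Proposition \ref{pp'}, the substantive work is already done, so the remaining delicate point is the parity fact that $d_1$ is even. Without it the bound would only be $p^{d_0/2}2^{(d_1-1)/2}=\tfrac12\dim M$, and the minimality argument would permit $M$ to split as a sum of two irreducibles; thus the even-ness of $d_1$ is exactly what upgrades ``correct dimension'' to ``irreducible.'' I would therefore expect the main care to lie in the clean super PBW freeness statement (so that the induced rank is genuinely a product of a $p$-power and a $2$-power) and in the bookkeeping that carries the construction over to $\mathfrak{sl}_{m|n}$.
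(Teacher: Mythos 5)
Your proposal is correct and follows essentially the same route as the paper's own proof: identify $\chi$ with the nilpotent $e_{r,q}$, induce a one-dimensional module from the parabolic $\ppp'$ of the shifted pyramid, use Proposition \ref{pp'} together with \eqref{dim} to get dimension $p^{d_0/2}2^{d_1/2}$, and conclude irreducibility from the super Kac--Weisfeiler divisibility bound. Your explicit remarks on the evenness of $d_1$ and the composition-series argument merely spell out what the paper compresses into ``thereby irreducible by Proposition \ref{wzd}.''
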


\begin{proof} Since the non-degenerate supersymmetric bilinear form on $\ggg$ enables us to identify nilpotent $p$-characters of $\ggg$ with nilpotent elements in $\ggg_{\bar0}$, we can assume that $\chi$ corresponds to the nilpotent element $e\in\ggg_{\bar0}$, i.e. $\chi(\cdot)=(e,\cdot)$. Recall that $(\mbox{GL}_{m|n})_\ev=\mbox{GL}(m,\bbk)\times \mbox{GL}(n,\bbk)$, and denote by $(\mbox{SL}_{m|n})_\ev:=\mbox{SL}(m,\bbk)\times \mbox{SL}(n,\bbk)$. Up to $(\mbox{GL}_{m|n})_\ev$-conjugation (resp. $(\mbox{SL}_{m|n})_\ev$-conjugation), we might as well assume that $e=e_{r,q}$ is a nilpotent matrix of Jordan canonical form of type $(r,q)$, where $(r,q)$ denotes the pair of partitions $r\vdash m$ and $q\vdash n$.

Associated with $e$, we can define the Dynkin pyramid $P_{r,q}$ and the shifted one $P'$ respectively, by the same way as that in  \S\ref{2}. As defined in \eqref{pp'2}, let $\ppp$ and $\ppp'$ be the parabolic subalgebras of $\ggg$ corresponding to the pyramids $P_{r,q}$ and $P'$, respectively. Retain the notations as those in \S\ref{2}. Since $\chi(y)=0$ for all $y\in\ggg^{P'}(i)$ with $i\neq -2$, and $\mathfrak{p}'\subseteq\bigoplus_{i\geqslant0}\ggg^{P'}(i)$, we have $\chi(\mathfrak{p}')=0$. As $\mathfrak{p}'$ is a restricted subalgebra of $\ggg$, it is immediate that the $\mathds{k}$-algebra $U_\chi(\mathfrak{p}')$ admits a one-dimensional restricted representation, and we denote it by $V$.  Recall Proposition~\ref{pp'} shows that
\begin{equation*}
\underline{\dim}\,\mathfrak{p}'=\underline{\dim}\,\mathfrak{p}+\frac{\underline{\dim}\,\ggg^{P_{r,q}}(-1)}{2}=\sum_{i\geqslant 0}\underline{\dim}\,\ggg^{P_{r,q}}(i)+\frac{\underline{\dim}\,\ggg^{P_{r,q}}(-1)}{2},
\end{equation*}
then we have $\text{dim}\,U_\chi(\mathfrak{p}')=p^{\frac{\text{dim}\,\ggg^{P_{r,q}}(-1)_{\bar0}}{2}+\sum\limits_{i\geqslant 0}\text{dim}\,\ggg^{P_{r,q}}(i)_{\bar0}}2^{\frac{\text{dim}\,\ggg^{P_{r,q}}(-1)_{\bar1}}{2}+\sum\limits_{i\geqslant 0}\text{dim}\,\ggg^{P_{r,q}}(i)_{\bar1}}$. Recall $d_i=\text{dim}\,\ggg_i-\text{dim}\,\ggg^e_i$ for $i\in\mathbb{Z}_2$, and $\text{dim}\,U_\chi(\ggg)=p^{\text{dim}\,\ggg_{\bar{0}}}2^{\text{dim}\,\ggg_{\bar{1}}}$ by the definition. Combining with \eqref{dim} it follows that the induced $U_\chi(\ggg)$-module $U_\chi(\ggg)\otimes_{U_\chi(\mathfrak{p}')}V$ has dimension
\begin{equation}
\begin{array}{lll}
\frac{\text{dim}\,U_\chi(\ggg)}{\text{dim}\,U_\chi(\mathfrak{p}')}&=&
\frac{p^{\sum\limits_{i\in\mathbb{Z}}\text{dim}\,\ggg^{P_{r,q}}(i)_{\bar0}}2^{\sum\limits_{i\in\mathbb{Z}}\text{dim}\,\ggg^{P_{r,q}}(i)_{\bar1}}}
{p^{\frac{\text{dim}\,\ggg^{P_{r,q}}(-1)_{\bar0}}{2}+\sum\limits_{i\geqslant 0}\text{dim}\,\ggg^{P_{r,q}}(i)_{\bar0}}2^{\frac{\text{dim}\,\ggg^{P_{r,q}}(-1)_{\bar1}}{2}+\sum\limits_{i\geqslant 0}\text{dim}\,\ggg^{P_{r,q}}(i)_{\bar1}}}\\
&=&p^{\frac{\text{dim}\,\ggg^{P_{r,q}}(-1)_{\bar0}}{2}+\sum\limits_{i\leqslant -2}\text{dim}\,\ggg^{P_{r,q}}(i)_{\bar0}}2^{\frac{\text{dim}\,\ggg^{P_{r,q}}(-1)_{\bar1}}{2}+\sum\limits_{i\leqslant -2}\text{dim}\,\ggg^{P_{r,q}}(i)_{\bar1}}\\
&=&p^{\frac{d_0}{2}}2^{\frac{d_1}{2}},
\end{array}
\end{equation} thereby irreducible by Proposition \ref{wzd}.
We complete the proof.
\end{proof}

\subsection{Arbitrary $p$-character case} We will extend the above arguments to the arbitrary $p$-character case in the  sequent.

First recall some basics on simple superalgebras (cf. \cite[\S12.1]{KL}).
Let $V$ be a superspace with $\text{\underline{dim}}\,V=(m,n)$, then $\mathcal{M}(V):=\text{End}_\mathds{k}(V)$ is a superalgebra with $\text{\underline{dim}}\,\mathcal{M}(V)=(m^2+n^2,2mn)$. The algebra $\mathcal{M}(V)$ is defined uniquely up to an isomorphism by the superdimension $(m,n)$ of $V$. Thus we can speak of the superalgebra $\mathcal{M}_{m,n}:=\mathcal{M}(V)$. We have an isomorphism of $\mathds{k}$-algebras
\begin{equation}\label{MM}
\mathcal{M}_{m,n}\otimes\mathcal{M}_{k,l}\cong\mathcal{M}_{mk+nl,ml+nk}.
\end{equation}
Moreover,  \cite[Example 12.1.1]{KL} shows that $\mathcal{M}_{m,n}$ is a simple superalgebra.

Now we will recall some known facts on the representations of finite-dimensional superalgebras \cite[\S12]{KL}. Let $A$ and $B$ be finite-dimensional superalgebras over $\mathds{k}$. Given left modules $V$ and $W$ over $\mathds{k}$-algebras $A$ and $B$ respectively, the (outer) tensor product $V\boxtimes W$ is the space $V\otimes W$ considered as an $A\otimes B$-module via
$$(a\otimes b)(v\otimes w)=(-1)^{|b||v|}av\otimes bw\qquad(a\in A,\,b\in B,\,v\in V,\,w\in W).$$

Return to our concern with $\ggg=\mathfrak{gl}_{m|n}$ or $\mathfrak{sl}_{m|n}$. Let $\xi=\xi_{s}+\xi_{n}$ be the Jordan decomposition of $\xi\in\ggg^*_{\bar0}$ (we regard $\xi\in\ggg^*$ by letting $\xi(\ggg_{\bar1})=0$). Under the isomorphism $\ggg^*_{\bar0}\cong\ggg_{\bar0}$ induced by the non-degenerate bilinear form $(\cdot,\cdot)$ on $\ggg_{\bar0}$, we identify $\xi$ with $x\in \ggg_\bz$, which has the Jordan decomposition $x=s+n$ on $\ggg_{\bar0}$.
Without loss of generality, we might as well assume that $s$ falls in the canonical Cartan subalgebra $\frak{h}$ of $\ggg_\bz$.
Denote by $\mathfrak{l}:=\ggg^{s}$ the centralizer of $s$ in $\ggg$. Let $\Phi$ be the canonical root system of $\ggg$, and $\Phi(\mathfrak{l}):=\{\alpha\in\Phi~|~\alpha(s)=0\}$. By \cite[Proposition 5.1]{WZ}, $\mathfrak{l}$ is always a direct sum of basic Lie superalgebras with a system $\Pi$ of simple roots of $\ggg$ such that $\Pi\cap \Phi(\mathfrak{l})$ is a system of simple roots of $\Phi(\mathfrak{l})$ (note that a toral subalgebra of $\ggg$ may also appear in the summand).

Set $$\mathfrak{l}=\ggg^{s}=\bigoplus\limits_{i=1}^r(\ggg)_i\oplus\mathfrak{t},$$ where $(\ggg)_i$ is a basic Lie superalgebra for $1\leqslant i\leqslant r$, and
$\mathfrak{t}$ is a toral subalgebra of $\ggg$. As $\ggg=\mathfrak{gl}_{m|n}$ or $\mathfrak{sl}_{m|n}$, by easy calculation one can conclude that each summand of $\bigoplus\limits_{i=1}^r(\ggg)_i$ is always isomorphic to a Lie superalgebra of type $\mathfrak{gl}_{M|N}$ or  $\mathfrak{sl}_{M|N}$ for some $M,N\in\mathbb{Z}_+$. It is notable that $\xi_s|_{\mathfrak{l}}=0$, hence $\xi|_{\mathfrak{l}}=\xi_n|_{\mathfrak{l}}$ is a nilpotent $p$-character of $\mathfrak{l}$. Set $\xi_{n}=\xi_1+\cdots+\xi_r$, where $\xi_i\in(\ggg)_i^*$ (which can be viewed in $\mathfrak{l}^*$ by letting $\xi_i(y)=0$ for all $y\in\bigoplus\limits_{j\neq i}(\ggg)_j\oplus\mathfrak{t}$) for $1\leqslant i\leqslant r$. Let $n=n_1+\cdots+n_r$ be the corresponding decomposition of $n$ in $\mathfrak{l}$ such that $\xi_i(\cdot)=(n_i,\cdot)$ for $1\leqslant i\leqslant r$.

Since $\mathfrak{l}=\bigoplus\limits_{i=1}^r(\ggg)_i\oplus\mathfrak{t}$, we have
\begin{equation}\label{lsumt'}
U_{\xi_{n}}(\mathfrak{l})\cong U_{\xi_{n}}(\bigoplus\limits_{i=1}^r(\ggg)_i\oplus\mathfrak{t})\cong
U_{\xi_{n}}(\bigoplus\limits_{i=1}^r(\ggg)_i)\otimes U_0(\mathfrak{t})
\end{equation} as $\mathds{k}$-algebras. Set
\begin{equation}\label{arbitdim}
\begin{array}{rllrlll}
d_0&:=&\text{dim}\,\ggg_{\bar0}-\text{dim}\,\ggg^{x}_{\bar0},&
d_1&:=&\text{dim}\,\ggg_{\bar1}-\text{dim}\,\ggg^{x}_{\bar1},\\
(d_0)_i&:=&\text{dim}\,((\ggg)_i)_{\bar0}-\text{dim}\,((\ggg)^{n_i}_i)_{\bar0},&
(d_1)_i&:=&\text{dim}\,((\ggg)_i)_{\bar1}-\text{dim}\,((\ggg)^{n_i}_i)_{\bar1},
\end{array}
\end{equation}where $\ggg^{x}$ denotes the centralizer of $x$ in $\ggg$, and $(\ggg)^{n_i}_i$ the centralizer of $n_i$ in $(\ggg)_i$ for $i\in\{1,\cdots,r\}$. Define
\begin{equation}\label{sumdim}
d'_0:=\sum\limits_{i=1}^r(d_0)_i,\qquad
d'_1:=\sum\limits_{i=1}^r(d_1)_i.
\end{equation}

Retain the notations as above. We now claim that
\begin{lemma}\label{d'}
The reduced enveloping algebra $U_{\xi_{n}}(\mathfrak{l})$ over $\mathds{k}=\overline{\mathbb{F}}_p$ admits a representation of dimension $p^{\frac{d'_0}{2}}2^{\frac{d'_1}{2}}$.
\end{lemma}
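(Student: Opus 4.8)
The plan is to exploit the algebra decomposition \eqref{lsumt'} and reduce the construction of a module over $U_{\xi_{n}}(\mathfrak{l})$ to the construction of modules over its tensor factors, each of which is either a reduced enveloping algebra of a nilpotent $p$-character on a basic summand of type $\gl$ or $\mathfrak{sl}$, or the reduced enveloping algebra $U_0(\mathfrak{t})$ of a torus. Concretely, since the $(\ggg)_i$ are mutually commuting ideals of $\mathfrak{l}$, one refines \eqref{lsumt'} to an isomorphism of $\mathds{k}$-superalgebras
$$U_{\xi_{n}}(\mathfrak{l})\cong U_{\xi_1}((\ggg)_1)\otimes\cdots\otimes U_{\xi_r}((\ggg)_r)\otimes U_0(\mathfrak{t}),$$
so it suffices to produce a module over each tensor factor and then assemble them by the outer tensor product $\boxtimes$ recalled above.

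First I treat the basic summands. For each $1\leqslant i\leqslant r$, the summand $(\ggg)_i$ is isomorphic to $\gl_{M_i|N_i}$ or $\mathfrak{sl}_{M_i|N_i}$, and $\xi_i$ is a nilpotent $p$-character on it, since $\xi_i(\cdot)=(n_i,\cdot)$ with $n_i$ nilpotent. Hence Proposition \ref{main2} applies (with the requisite restrictions $p>2$, and $p\nmid(M_i-N_i)$ for the $\mathfrak{sl}$-type summands, which are to be inherited from the global hypotheses on $\ggg$) and yields a Kac-Weisfeiler module $V_i$ over $U_{\xi_i}((\ggg)_i)$ of dimension $p^{\frac{(d_0)_i}{2}}2^{\frac{(d_1)_i}{2}}$, with $(d_0)_i,(d_1)_i$ as in \eqref{arbitdim}; note $(d_1)_i$ is even by Remark \ref{even}. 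Next I treat the toral summand: $\mathfrak{t}$ is a purely even abelian restricted subalgebra, so $U_0(\mathfrak{t})$ is a finite-dimensional commutative semisimple $\mathds{k}$-algebra (a product of copies of $\mathds{k}$, as $\mathds{k}$ is algebraically closed), and therefore possesses a one-dimensional module $V_0$.

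Finally I form the outer tensor product $V:=V_1\boxtimes\cdots\boxtimes V_r\boxtimes V_0$, which by the sign-twisted action recalled above is a module over $U_{\xi_1}((\ggg)_1)\otimes\cdots\otimes U_{\xi_r}((\ggg)_r)\otimes U_0(\mathfrak{t})\cong U_{\xi_{n}}(\mathfrak{l})$. Its dimension multiplies across the factors, giving
$$\dim V=\Big(\prod_{i=1}^r p^{\frac{(d_0)_i}{2}}2^{\frac{(d_1)_i}{2}}\Big)\cdot 1=p^{\frac{\sum_{i=1}^r(d_0)_i}{2}}2^{\frac{\sum_{i=1}^r(d_1)_i}{2}}=p^{\frac{d'_0}{2}}2^{\frac{d'_1}{2}}$$
by the definitions \eqref{sumdim} of $d'_0$ and $d'_1$, which is exactly the asserted dimension.

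These steps are essentially bookkeeping once Proposition \ref{main2} is in hand; the one point demanding care, which I expect to be the main obstacle, is the legitimacy of invoking Proposition \ref{main2} for every basic summand simultaneously — in particular verifying that the divisibility condition $p\nmid(M_i-N_i)$ holds for each $\mathfrak{sl}$-type summand of $\mathfrak{l}=\ggg^{s}$ when $\ggg=\mathfrak{sl}_{m|n}$, tracing it back to $p\nmid(m-n)$ through the block structure of the centralizer. A secondary, more routine point is to confirm that the $\boxtimes$-action is compatible with the superalgebra isomorphism \eqref{lsumt'}, i.e. that the parity sign conventions match, so that $V$ is genuinely a $U_{\xi_{n}}(\mathfrak{l})$-module of the stated dimension.
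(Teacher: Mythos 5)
Your proposal is correct and follows essentially the same route as the paper: decompose $U_{\xi_{n}}(\mathfrak{l})$ via \eqref{lsumt'}, apply Proposition \ref{main2} to each basic summand to get the $V_i$, take a one-dimensional module over the commutative semisimple algebra $U_0(\mathfrak{t})$, and assemble everything with the outer tensor product $\boxtimes$. The caveat you flag about checking $p\nmid(M_i-N_i)$ for $\mathfrak{sl}$-type summands is a fair observation, but the paper's own proof invokes Proposition \ref{main2} for the summands without further comment, so your argument is no less complete than the original.
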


\begin{proof}
(i) First consider the representations of the $\mathds{k}$-algebra $U_{\xi_{n}}(\bigoplus\limits_{i=1}^r(\ggg)_i)$. Since $\xi_n|_{\mathfrak{l}}$ is nilpotent and each $(\ggg)_i$ ($1\leqslant i\leqslant r$) is always isomorphic to a Lie superalgebra of type $\mathfrak{gl}_{M|N}$ or  $\mathfrak{sl}_{M|N}$ for some $M,N\in\mathbb{Z}_+$, Proposition \ref{main2} shows that the $\mathds{k}$-algebra $U_{\chi_i}((\ggg)_i)$ admits an irreducible representation of dimension $p^{\frac{(d_0)_i}{2}}2^{\frac{(d_1)_i}{2}}$ for each $1\leqslant i\leqslant r$; call it $V_i$. By induction one can show that $V_1\boxtimes V_2\boxtimes\cdots\boxtimes V_r$ is a $U_{\xi_{n}}(\bigoplus\limits_{i=1}^r(\ggg)_i)\cong\bigotimes\limits_{i=1}^r U_{\xi_i}((\ggg)_i)$-module of dimension $p^{\sum\limits_{i=1}^r\frac{(d_0)_i}{2}}2^{\sum\limits_{i=1}^r\frac{(d_1)_i}{2}}=p^{\frac{d'_0}{2}}2^{\frac{d'_1}{2}}$.

(ii) Now discuss the representations of the $\mathds{k}$-algebra $U_0(\mathfrak{t})$. As $\mathfrak{t}$ is a toral subalgebra of $\ggg$ with a basis $\{t_1,\cdots,t_d\}$ such that $t_i^{[p]}=t_i$ for all $1\leqslant i\leqslant d$, then $U_0(\mathfrak{t})\cong A_1^{\otimes d}$ where $A_1\cong\mathds{k}[X]/(X^p-X)$ is a $p$-dimensional commutative semisimple $\mathds{k}$-algebra (in the usual sense, not super) whose irreducible representations are all $1$-dimensional. Hence the $\mathds{k}$-algebra $U_0(\mathfrak{t})$ admits an irreducible representation of dimension $1$; call it $W$.

As $U_{\xi_{n}}(\mathfrak{l})\cong U_{\xi_{n}}(\bigoplus\limits_{i=1}^r(\ggg)_i)\otimes U_0(\mathfrak{t})$ by \eqref{lsumt'}, all the discussions above show that $(\boxtimes_{i=1}^rV_i)\boxtimes W$ is a $U_{\xi_{n}}(\mathfrak{l})$-module of dimension $p^{\frac{d'_0}{2}}2^{\frac{d'_1}{2}}$.
\end{proof}

\subsection{Main result} Let $\mathfrak{b}=\mathfrak{h}\oplus\mathfrak{n}$ be the Borel subalgebra associated to $\Pi$. Define a parabolic subalgebra $\mathfrak{p}_\Pi=\mathfrak{l}+\mathfrak{b}=\mathfrak{l}\oplus\mathfrak{u}$, where $\mathfrak{u}$ is the nilradical of $\mathfrak{p}_\Pi$.
Let $\mathfrak{u}^-$ be the complement space of $\mathfrak{p}_\Pi$ in $\ggg$ such that  $\ggg=\mathfrak{p}_\Pi\oplus\mathfrak{u}^-=\mathfrak{u}\oplus\mathfrak{l}\oplus\mathfrak{u}^-$  as vector spaces.
Since $\xi(\mathfrak{u})=0$ and $\xi|_{\mathfrak{l}}=\xi_{n}|_{\mathfrak{l}}$ is nilpotent by \cite[\S5.1]{WZ}, any $U_\xi(\mathfrak{l})$-mod can be regarded as a $U_\xi(\mathfrak{p}_\Pi)$-mod with a trivial action of $\mathfrak{u}$. In \cite[Theorem 5.2]{WZ} Wang-Zhao proved that the $\mathds{k}$-algebras $U_\xi(\ggg)$ and $U_\xi(\mathfrak{l})$ are Morita equivalent, and any irreducible $U_\xi(\ggg)$-module can be induced from an irreducible $U_\xi(\mathfrak{l})$-mod (considered as a $U_\xi(\mathfrak{p}_\Pi)$-mod with a trivial action of $\mathfrak{u}$) by
\begin{equation}\label{gp}
U_\xi(\ggg)\otimes_{U_\xi(\mathfrak{p}_\Pi)}-:\qquad U_\xi(\mathfrak{l})\text{-}mod\rightarrow U_\xi(\ggg)\text{-}mod.
\end{equation}

Now we are in a position to verify the existence of Kac-Weisfeier modules in the $U_\xi(\ggg)$-module category with arbitrary $p$-character $\xi$.

\begin{theorem}\label{main4}
Let $\ggg=\mathfrak{gl}_{m|n}$ or $\mathfrak{sl}_{m|n}$, and let $\xi\in \ggg^*_{\bar0}$.  Retain the notations as \eqref{arbitdim} and \eqref{sumdim} with additional assumption  that $p\nmid(m-n)$ for $\ggg=\mathfrak{sl}_{m|n}$. Then the reduced enveloping algebra $U_\xi(\ggg)$ admits a Kac-Weisfeiler module, i.e. an irreducible module of dimension $p^{\frac{d_0}{2}}2^{\frac{d_1}{2}}$.
\end{theorem}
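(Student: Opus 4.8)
The plan is to reduce the arbitrary $p$-character case to the nilpotent case already handled in Proposition~\ref{main2}, using the Morita equivalence \eqref{gp} of Wang-Zhao together with the dimension count supplied by Lemma~\ref{d'}. Let $\xi=\xi_s+\xi_n$ be the Jordan decomposition and identify $\xi$ with $x=s+n\in\ggg_{\bar0}$ as in the preceding discussion, so that $\mathfrak{l}=\ggg^s=\bigoplus_{i=1}^r(\ggg)_i\oplus\mathfrak{t}$. The first step is to build an irreducible $U_\xi(\mathfrak{l})$-module of the correct dimension. Since $\xi|_{\mathfrak{l}}=\xi_n|_{\mathfrak{l}}$ is nilpotent, Lemma~\ref{d'} produces a $U_{\xi_n}(\mathfrak{l})$-module $M:=(\boxtimes_{i=1}^r V_i)\boxtimes W$ of dimension $p^{\frac{d'_0}{2}}2^{\frac{d'_1}{2}}$, where each $V_i$ is the Kac-Weisfeiler module for the type-$A$ summand $(\ggg)_i$ from Proposition~\ref{main2} and $W$ is one-dimensional over the toral part. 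I would first check that this $M$ is in fact irreducible as a $U_\xi(\mathfrak{l})$-module: each $V_i$ is irreducible by Proposition~\ref{main2}, $W$ is irreducible, and an outer tensor product of irreducible modules over the (super)tensor product of the algebras remains irreducible, which is exactly where the simple-superalgebra facts \eqref{MM} and the structure of $\mathcal{M}_{m,n}$ recalled in \S3.2 get used.

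The second step is to transport $M$ through the induction functor \eqref{gp}. Regard $M$ as a $U_\xi(\mathfrak{p}_\Pi)$-module with trivial $\mathfrak{u}$-action (legitimate because $\xi(\mathfrak{u})=0$), and form $U_\xi(\ggg)\otimes_{U_\xi(\mathfrak{p}_\Pi)}M$. By \cite[Theorem 5.2]{WZ} this functor is an equivalence, so the induced module is irreducible. The dimension of the induced module is $\dim M$ multiplied by the rank factor $\dim U_\xi(\ggg)/\dim U_\xi(\mathfrak{p}_\Pi)=\dim U(\mathfrak{u}^-)$, accounting for the $\mathfrak{u}^-$ part of the triangular decomposition $\ggg=\mathfrak{u}\oplus\mathfrak{l}\oplus\mathfrak{u}^-$.

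The third, purely arithmetic, step is to confirm that $\dim M$ times this rank factor equals $p^{\frac{d_0}{2}}2^{\frac{d_1}{2}}$. This requires showing
\begin{equation*}
\frac{d_0-d'_0}{2}=\dim(\mathfrak{u}^-)_{\bar0},\qquad
\frac{d_1-d'_1}{2}=\dim(\mathfrak{u}^-)_{\bar1}.
\end{equation*}
The point is that $d_0,d_1$ measure the full coadjoint orbit of $x=s+n$, while $d'_0,d'_1$ measure only the nilpotent orbit of $n$ inside $\mathfrak{l}=\ggg^s$; the difference is precisely twice the contribution of the directions transverse to $\mathfrak{l}$, i.e. $\dim\mathfrak{u}=\dim\mathfrak{u}^-$. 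Concretely I would use that $\ggg^x=\mathfrak{l}^n$ (the centralizer of $n$ inside $\mathfrak{l}$, since $x=s+n$ and $[s,n]=0$), so $\dim\ggg_i-\dim\ggg^x_i=(\dim\ggg_i-\dim\mathfrak{l}_i)+(\dim\mathfrak{l}_i-\dim\mathfrak{l}^n_i)=2\dim(\mathfrak{u}^-)_i+(d'_0)_i$ or $(d'_1)_i$ respectively, the factor $2$ coming from $\dim\mathfrak{u}_i=\dim\mathfrak{u}^-_i$ under the nondegenerate form. Assembling these yields the asserted dimension, and irreducibility is inherited through the Morita equivalence, completing the proof.

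I expect the main obstacle to be the bookkeeping in the irreducibility claim for the outer tensor product $M$ over the super tensor product algebra: in the super setting an outer tensor product of simple modules need not be simple unless one tracks the parity of the modules carefully (the type $\mathfrak{q}$ versus type $\mathfrak{m}$ dichotomy for simple supermodules), so one must verify that each $V_i$ is of the right type for \eqref{MM} to apply and that no extra endomorphisms appear. The Remark~\ref{even} observation that $d_1$ is always even for $\mathfrak{gl}_{m|n}$ and $\mathfrak{sl}_{m|n}$ is what guarantees the exponent $\tfrac{d_1}{2}$ is an integer and that the naive $2^{d_1/2}$ target is attainable, so I would confirm that this evenness descends to each summand $(d_1)_i$ as well.
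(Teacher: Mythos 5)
Your overall strategy coincides with the paper's: decompose $\xi=\xi_s+\xi_n$, build a $U_{\xi_n}(\mathfrak{l})$-module of dimension $p^{d'_0/2}2^{d'_1/2}$ via Lemma~\ref{d'}, induce through $U_\xi(\mathfrak{p}_\Pi)$ using the Wang--Zhao Morita equivalence \eqref{gp}, and verify the exponent bookkeeping $\dim\mathfrak{u}^-_{\bar i}=\tfrac{d_i-d'_i}{2}$ exactly as in \eqref{gtol}--\eqref{dimu} (your identification $\ggg^x=\mathfrak{l}^n$ is the same input, drawn from \cite[Theorem 5.6]{WZ}). The arithmetic step is correct.

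Where you diverge is in how irreducibility is obtained, and this is where your argument as written has a gap. You propose to first prove that $M=(\boxtimes_i V_i)\boxtimes W$ is irreducible over $U_{\xi_n}(\mathfrak{l})$ and then transport irreducibility through the Morita equivalence. As you yourself note, in the super setting an outer tensor product of simple supermodules need not be simple (the type $\mathsf{Q}$ issue), and nothing in Proposition~\ref{main2} or Lemma~\ref{d'} tells you the type of each $V_i$; you flag this as ``the main obstacle'' but do not resolve it, so your chain of reasoning for irreducibility is incomplete. The paper sidesteps this entirely: Lemma~\ref{d'} only asserts the \emph{existence of a module of the right dimension} (no irreducibility claim), and after induction the resulting $U_\xi(\ggg)$-module has dimension exactly $p^{d_0/2}2^{d_1/2}$, which by Proposition~\ref{wzd2} is the minimal possible dimension of any nonzero $U_\xi(\ggg)$-module; hence it has no proper nonzero submodules and is automatically irreducible. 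Replacing your tensor-product irreducibility step by this one-line appeal to the super Kac--Weisfeiler divisibility bound closes the gap and makes the type $\mathsf{Q}$/$\mathsf{M}$ discussion unnecessary. (A minor further point: the rank factor is $\dim U_\xi(\ggg)/\dim U_\xi(\mathfrak{p}_\Pi)=p^{\dim\mathfrak{u}^-_{\bar0}}2^{\dim\mathfrak{u}^-_{\bar1}}$, i.e.\ the dimension of the \emph{reduced} enveloping algebra of $\mathfrak{u}^-$, not of $U(\mathfrak{u}^-)$.)
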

\begin{proof}
First note that \cite[Theorem 5.6]{WZ} shows
\begin{equation}\label{gtol}
\begin{split}
\text{\underline{dim}}\,\ggg-\text{\underline{dim}}\,\ggg^{x}&=\text{\underline{dim}}\,
\ggg-\text{\underline{dim}}\,\mathfrak{l}^{n}\\
&=2\text{\underline{dim}}\,\mathfrak{u}^-+(\text{\underline{dim}}\,\mathfrak{l}-\text{\underline{dim}}\,\mathfrak{l}^{n}),
\end{split}
\end{equation} where $\mathfrak{l}^{n}$ denotes the centralizer of $n$ in $\mathfrak{l}$.
Since $\mathfrak{l}=\bigoplus\limits_{i=1}^r(\ggg)_i\oplus\mathfrak{t}$ and $n\in \bigoplus\limits_{i=1}^r(\ggg)_i$, it is obvious that the centralizer of $n$ in $\mathfrak{t}$ satisfies $\mathfrak{t}^{n}=\mathfrak{t}$, then
\begin{equation}\label{ltosum}
\begin{split}
\text{\underline{dim}}\,\mathfrak{l}-\text{\underline{dim}}\,\mathfrak{l}^{n}=&\sum\limits_{i=1}^r
(\text{\underline{dim}}\,(\ggg)_i-\text{\underline{dim}}\,(\ggg)_i^{n_i})+
\text{\underline{dim}}\,\mathfrak{t}-\text{\underline{dim}}\,\mathfrak{t}^{ n}\\=&(\sum\limits_{i=1}^r(d_0)_i,\sum\limits_{i=1}^r(d_1)_i).\end{split}
\end{equation} As $\text{\underline{dim}}\,\mathfrak{u}^-=\text{\underline{dim}}\,\mathfrak{u}$, \eqref{gtol} shows that
\begin{equation}\label{dimu}
\begin{array}{rclllll}
\text{dim}\,\mathfrak{u}^-_{\bar0}&=&\text{dim}\,\mathfrak{u}_{\bar0}&=&\frac{d_0-\sum\limits_{i=1}^r(d_0)_i}{2}&=
&\frac{d_0-d'_0}{2};\\ \text{dim}\,\mathfrak{u}^-_{\bar1}&=&\text{dim}\,\mathfrak{u}_{\bar1}&=&\frac{d_1-\sum\limits_{i=1}^r(d_1)_i}{2}&=
&\frac{d_1-d'_1}{2}.
\end{array}
\end{equation}
Recall Lemma~\ref{d'} shows that the $\mathds{k}$-algebra $U_{\xi_{n}}(\mathfrak{l})$ admits a representation of dimension $p^{\frac{d'_0}{2}}2^{\frac{d'_1}{2}}$; call it $V'$. Then $V:=U_\xi(\ggg)\otimes_{U_\xi(\mathfrak{p}_\Pi)}V'$ is a $U_\xi(\ggg)$-module of dimension $$p^{\frac{d'_0}{2}}2^{\frac{d'_1}{2}}\cdot p^{\frac{d_0-d'_0}{2}}2^{\frac{d_0-d'_1}{2}}=
p^{\frac{d_0}{2}}2^{\frac{d_1}{2}}$$ by \eqref{gp}, thereby irreducible by Proposition \ref{wzd2}.
Such a $V$ is a Kac-Weisfeiler module. We complete the proof.
\end{proof}

\subsection*{Acknowledgements} The authors would like to express their deep thanks to Weiqiang Wang for  helpful discussions.


\begin{thebibliography}{A1}

   \bibitem{BG3} J. Brundan and S. M. Goodwin, {\em Good grading polytopes}, Proc. London Math. Soc. 94 (2007), 155-180.

   \bibitem{EK}  A. G. Elashvili and V. G. Kac, {\em Classification of good gradings of simple Lie algebras}, Lie groups
    and invariant theory (ed. E. B. Vinberg), American Mathematical Society Translations (2) 213 (AMS,
    Providence, RI, 2005), 85-104.

   \bibitem{H} C. Hoyt, {\em Good gradings of basic Lie superalgebras}, Israel Journal of Mathematics 192 (2012), 1-30.

   \bibitem{Hu} J. E. Humphreys, {\em Modular representations of simple Lie algebras}, Bull. Amer. Math. Soc. (N.S.) 35 (1998), 105-122.

   \bibitem{KL} A. Kleshchev, {\em Linear and projective representations of symmetric groups}, Cambridge Tracts in Mathematics 163 (Cambridge University Press, Cambridge, 2005).

   \bibitem{KW} B. Ju. Weisfeiler and V. G. Kac, {\em Irreducible representations of Lie p-algebras}, Funct. Anal. Appl. 5 (1971), 28-36.

   \bibitem{Peng3} Y. Peng, {\em On shifted super Yangians and a class of finite $W$-superalgebras}, J. Algebra (2014), http://dx.doi.org/10.1016/j.jalgebra.2014.09.015.

   \bibitem{P1} A. Premet, {\em Irreducible representations of Lie algebras of reductive groups and the Kac-Weisfeiler conjecture}, Invent. Math. 121 (1995), 79-117.

   \bibitem{P7} A. Premet, {\em Commutative quotients of finite $W$-algebras}, Advances in Mathematics 225 (2010), 269-306.

   \bibitem{WZ} W. Wang and L. Zhao, {\em Representations of Lie superalgebras in prime characteristic I}, Proc. London Math. Soc. 99 (2009), 145-167.

   \bibitem{W} W. Wang, {\em Nilpotent orbits and $W$-algebras}, Fields Institute Communications Series 59 (2011), 71-105.

   \bibitem{Zeng} Y. Zeng, {\em Study on some problems in finite $W$-algebras and finite $W$-superalgebras}, PhD thesis, East China Normal University, 2012.

   \bibitem{ZS'} Y. Zeng and B. Shu, {\em Finite W-superalgebras for basic classical Lie superalgebras}, preprint, arXiv: 1404. 1150v2 [MathRT].

   \bibitem{ZS} Y. Zeng and B. Shu, {\em Finite W-superalgebras for basic Lie superalgebras}, submitted.

   \bibitem{ZS2} Y. Zeng and B. Shu, {\em Finite $W$-superalgebras and the dimensional lower bounds for the representations of basic Lie superalgebras}, submitted.

\end{thebibliography}
\end{document}